\documentclass[12pt]{amsart}
\setlength{\textheight}{8.9in}
\setlength{\textwidth}{6.5in}
\hoffset=-0.8in
\voffset=-0.3in

\usepackage{latexsym}
\usepackage{amsfonts,amssymb,amsmath,amsthm,amscd}
\usepackage[mathscr]{eucal}
\usepackage{graphicx}
\usepackage{xcolor}

\newcommand{\C}{\mbox{${\mathbb C}$}}

\newcommand{\I}{\mbox{${\mathbb I}$}}

\newcommand{\PP}{\mbox{${\mathbb P}$}}

\newcommand{\R}{\mbox{${\mathbb R}$}}
\newcommand{\Z}{\mbox{${\mathbb Z}$}}

\newcommand{\tr}{{\rm tr}}

\newcommand{\Ric}{{\rm Ric}}

\newcommand{\SU}{{\rm SU}}
\newcommand{\Sp}{{\rm Sp}}

\newcommand{\U}{{\rm U}}

\makeatletter
\def\numberwithin#1#2{\@ifundefined{c@#1}{\@nocnterrr}{%
  \@ifundefined{c@#2}{\@nocnterr}{%
  \@addtoreset{#1}{#2}%
  \toks@\expandafter\expandafter\expandafter{\csname the#1\endcsname}%
  \expandafter\xdef\csname the#1\endcsname
    {\expandafter\noexpand\csname the#2\endcsname
     .\the\toks@}}}}
\makeatother
\numberwithin{equation}{section}

\newtheorem{thm}[equation]{Theorem}
\newtheorem{lemma}[equation]{Lemma}
\newtheorem{prop}[equation]{Proposition}

\newtheorem{cor}[equation]{Corollary}

\newtheorem{ex}[equation]{Example}

\newtheorem{rem}[equation]{Remark}

\begin{document}

\title{On the Linear Stability of Nearly-K\"ahler $6$-manifolds}
\author{Changliang Wang}
\address{Max-Planck-Institut f\"ur Mathematik, Vivatsgasse 7, Bonn 53111, Germany}
\email{cwangmath@mpim-bonn.mpg.de}

\author{M. Y.-K. Wang}
\address{Department of Mathematics and Statistics, McMaster
University, Hamilton, Ontario, L8S 4K1, CANADA}
\email{wang@mcmaster.ca}

\date{revised \today}

\begin{abstract}
We show that a strict, nearly K\"ahler $6$-manifold with either second or third Betti number
nonzero is linearly unstable with respect to the $\nu$-entropy of Perelman and hence is dynamically
unstable for the Ricci flow.
\end{abstract}

\maketitle

\noindent{{\it Mathematics Subject Classification} (2000): 53C25, 53C27, 53C44}

\medskip
\noindent{{\it Keywords:} linear stability, $\nu$-entropy, nearly K\"ahler 6-manifolds, real Killing spinors}

\medskip
\setcounter{section}{0}

\section{\bf Introduction}

Manifolds which admit a non-trivial Killing spinor form a distinguished subclass of Einstein manifolds.
Recall that the Killing spinor equation is given by
\begin{equation} \label{KS-eqn}
 \nabla_X \sigma = c X \cdot \sigma
\end{equation}
where $\sigma$ is a complex spinor field, $c$ is a constant, $X$ is an arbitrary tangent vector, and
$\cdot$ denotes Clifford multiplication. Let $(M, g)$ be the underlying Riemannian spin manifold and $n$
be its (real) dimension. Since a Killing spinor is an eigenspinor for the Dirac operator:  $D \sigma = -nc \sigma$,
the constant $c$ is zero (parallel spinor case), purely imaginary, or real.

In the $c=0$ case, we obtain special geometries of Calabi-Yau, hyperk\"ahler, ${\rm G}_2$, and
${\rm Spin}(7)$ types. By the work of X. Dai, X. D. Wang, and G. Wei \cite{DWW05}, the underlying Ricci-flat metric
$g$ is linearly stable. When $c$ is purely imaginary, the manifolds were classified by H. Baum \cite{Bau89}.
By the work of Kr\"oncke \cite{Kr17} and the first author \cite{Wan17}, the Einstein metrics (with negative
scalar curvature) are also linearly stable.

When $c$ is real and nonzero, the Einstein metric $g$ has positive scalar curvature, and so by Lichnerowicz's
theorem it cannot admit any harmonic spinors. T. Friedrich \cite{Fr80} then derived a positive lower bound for the
eigenvalues of the square of the Dirac operator, and furthermore showed that the lower bound is
achieved precisely for those manifolds which admit a non-trivial Killing spinor. These manifolds are
known to be locally irreducible, and cannot be locally-symmetric unless they are spherical space-forms (which we will exclude from our discussion henceforth).
While they are far from being classified, there is a well-known rough classification by C. B\"ar \cite{Ba93}
in terms of the restricted holonomy of their metric cones $(\R_{+} \times M, dt^2 + t^2 g).$  The only
possibilities are $\SU(\frac{n+1}{2}), \Sp(\frac{n+1}{4}), {\rm G}_2$, or ${\rm Spin}(7)$. Thus $n$ can be even
only if $n=6$, and, in this case, by the work of Grunewald \cite{Gru90} (see also chapter 5 in \cite{BFGK91}),
$(M, g)$ is either isometric to round $S^6$ or a strict nearly K\"ahler $6$-manifold.

This article examines the linear stability of this class of Einstein $6$-manifolds.
Recall that a nearly K\"ahler manifold $(M, J, g)$ is an almost Hermitian manifold that satisfies
\begin{equation}  \label{NKcond}
  (\nabla_X J)X = 0
\end{equation}
for all tangent vectors $X$, where $\nabla$ denotes the Levi-Civita connection of $g$. The nearly
K\"ahler structure is {\em strict} if it is not K\"ahler.

For the purpose of this paper, a closed Einstein manifold $(M, g)$ is {\em linearly stable} if for all
transverse traceless (TT) symmetric $2$-tensors $h$, i.e., divergence-free and trace-free symmetric $2$-tensors,
the quadratic form
\begin{equation}  \label{stab-def}
{\mathscr Q}(h, h) = - \langle \nabla^* \nabla h - 2 \mathring{R}h, h \rangle_{L^2(M, g)} \leq 0.
\end{equation}
In the above $\mathring{R}$ is the action of the curvature tensor on symmetric $2$-tensors. $(M, g)$ is {\em linearly
unstable} if it is not linearly stable. The {\em coindex} of a quadratic form is the dimension of the
maximal subspace on which it is positive definite. More comments about stability will be given in section \ref{facts}.
Here we only mention that (a positive multiple of) the above quadratic form occurs in the second variation formula
of both the Einstein-Hilbert action and Perelman's $\nu$-entropy.

The main result of this article is

\begin{thm} \label{MainThm}
Let $(M, J, g)$ be a strict nearly K\"ahler $6$-manifold. If $b_2(M)$ or $b_3(M)$ is nonzero, then
$g$ is linearly unstable with respect to the Einstein-Hilbert action restricted to the space of
Riemannian metrics with constant scalar curvature and fixed volume. Hence it is also linearly
unstable with respect to the $\nu$-entropy of Perelman, and dynamically unstable with respect to
the Ricci flow.
\end{thm}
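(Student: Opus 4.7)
The strategy is to exhibit, for each nonzero harmonic form of degree $2$ or $3$, an explicit transverse traceless (TT) symmetric $2$-tensor $h$ on which $\mathscr{Q}(h,h)>0$. Normalize $g$ so that $\ric(g)=5g$, so that $2\lambda=10$ is the threshold for destabilization of the Lichnerowicz Laplacian $\Delta_L=\nabla^*\nabla+2\cdot 5-2\mathring{R}$. With $\omega(X,Y)=g(JX,Y)$, the $\SU(3)$-reduction determines real $3$-forms $\Psi^\pm$ satisfying $d\omega\propto\Psi^+$ and $d\Psi^-\propto\omega\wedge\omega$, and the canonical Hermitian connection $\bar\nabla$ parallelizes $J$, $\omega$, and $\Psi^\pm$ and has totally skew torsion proportional to $\Psi^+$.

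If $b_2(M)\neq 0$, take a nonzero harmonic $2$-form $\alpha$. A theorem of Verbitsky (reproduced and refined by Moroianu--Semmelmann) asserts that every harmonic $2$-form on a strict nearly K\"ahler $6$-manifold is $J$-invariant and $\omega$-primitive. Set $h(X,Y):=\alpha(JX,Y)$. Antisymmetry of $\alpha$ together with the $(1,1)$-identity $\alpha(JX,Y)=-\alpha(X,JY)$ makes $h$ symmetric; primitivity makes it trace-free; and the combination of $d\alpha=d^*\alpha=0$ with the defining nearly K\"ahler identity for $\nabla J$ (which expresses $(\nabla_X J)Y$ as a contraction of $X\wedge Y$ with $\Psi^+$) makes it divergence-free. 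Hence $h$ is TT.

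If $b_3(M)\neq 0$, take a nonzero harmonic $3$-form $\beta$. The $\SU(3)$ type decomposition $\Lambda^3 T^*M=\R\Psi^+\oplus\R\Psi^-\oplus\Lambda^3_{6}\oplus\Lambda^3_{12}$, together with the facts that $\Psi^+$ is exact, $\Psi^-$ is not co-closed, and $\Lambda^3_6\cong T^*M$ admits no nontrivial harmonic section (since $b_1(M)=0$ on any strict nearly K\"ahler $6$-manifold), forces $\beta\in\Lambda^3_{12}$. Contracting with $\Psi^+$ produces a $J$-antiinvariant symmetric $2$-tensor $h_{ij}:=\beta_i{}^{kl}\Psi^+_{jkl}+\beta_j{}^{kl}\Psi^+_{ikl}$, which one again verifies to be TT using the structure equations.

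The decisive step is to show $\Delta_L h=\mu h$ for some $\mu<10$. I would rewrite $\nabla^*\nabla h=\bar\nabla^*\bar\nabla h+(\text{torsion corrections in }\Psi^\pm)$, and then intertwine $\bar\nabla^*\bar\nabla h$ with the Hodge Laplacian acting on $\alpha$ (resp.\ $\beta$) through the contraction maps above. Harmonicity of $\alpha$ (resp.\ $\beta$) annihilates the Hodge piece and leaves a purely algebraic combination of $\mathring{R}$ and the $\SU(3)$ data, which one evaluates using the real Killing spinor identity together with the nearly K\"ahler curvature relations. The chief obstacle is bookkeeping: because $J$ is not $\nabla$-parallel, each passage between $\alpha$ (or $\beta$) and $h$ generates many correction terms proportional to $\nabla J\sim\Psi^\pm$, and organizing them symmetrically so that the bound $\mu<10$ survives is the heart of the computation. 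Once this is achieved, $g$ is linearly unstable for $\mathscr{Q}$, hence for the Einstein-Hilbert action on the constant-scalar-curvature, fixed-volume slice and for Perelman's $\nu$-entropy; dynamical instability under the Ricci flow then follows from the stability/instability correspondence for $\nu$ in the work of Kr\"oncke \cite{Kr17}.
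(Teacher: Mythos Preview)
Your proposal is correct and follows essentially the same route as the paper: the TT tensors you build from harmonic $2$- and $3$-forms are exactly the ones the paper uses (the paper's $\Omega^{+}$ is your $\Psi^{+}$), and the endgame---showing they lie in the positive part of $\mathscr{Q}$---is the same. Two remarks on execution. First, your proposed tactic of passing through $\bar\nabla^{*}\bar\nabla$ and torsion corrections is a reasonable alternative, but the paper instead works directly with the Levi--Civita connection, Gray's identities (\ref{curv-1})--(\ref{J-2}), and the Bochner formula, invoking $\overline{R}\cdot\Omega^{+}=0$ only at one isolated step in the $b_3$ computation; by this route it obtains the sharp values $\langle(\nabla^{*}\nabla-2\mathring{R})h,h\rangle_{L^2}=-4\|h\|_{L^2}^{2}$ in the $b_2$ case and the pointwise eigenvalue identity $(\nabla^{*}\nabla-2\mathring{R})h_{\eta}=-6h_{\eta}$ in the $b_3$ case. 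Second, your argument that a harmonic $3$-form has no $\Lambda^{3}_{6}$ component (``$\Lambda^{3}_{6}\cong T^{*}M$ admits no nontrivial harmonic section since $b_1=0$'') is not quite right as stated, since the Hodge Laplacian on $\Lambda^{3}$ does not a priori respect the $\SU(3)$ splitting and the bundle isomorphism is only algebraic; the paper handles this cleanly by citing Verbitsky's theorem that harmonic $3$-forms are primitive of type $(2,1)+(1,2)$. Finally, the dynamical instability is Theorem~1.3 of \cite{Kr15}, not \cite{Kr17}.
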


Note that an Einstein metric $g$ is {\em dynamically unstable} if there exists a non-trivial ancient
rescaled Ricci-flow $g_t,  -\infty < t \leq 0,$ such that $g_t$ converges modulo diffeomorphisms to $g$
in the pointed Cheeger-Gromov topology. The conclusion about dynamic instability in the above theorem
follows from Theorem 1.3 in \cite{Kr15}.

The proof of Theorem \ref{MainThm} actually shows that the coindex of the Einstein metric
$g$ (for either the Einstein-Hilbert action or $\nu$-functional) is $\geq b_2(M) + b_3(M)$.

By the theorem of Bonnet-Myers, a strict nearly K\"ahler $6$-manifold has finite fundamental group.
On the other hand, by pull-back any Riemannian cover of such a manifold also has a strict nearly
K\"ahler structure. From the properties of the transfer homomorphism, the corresponding Betti
numbers of any Riemannian cover are at least as large as those of the base. Hence the nearly K\"ahler metrics
on the covers are also linearly unstable.

At present there are very few examples of complete strict nearly K\"ahler $6$-manifolds. Recently,
Foscolo and Haskins produced the first non-homogeneous examples of such spaces \cite{FH17}.
One cohomogeneity one non-homogeneous nearly K\"ahler metric was produced on each of $S^6$
and $S^3 \times S^3$. Our result implies that the second metric is dynamically unstable.

In \cite{WW18} we showed that all the homogeneous nearly K\"ahler $6$-metrics other than the
isotropy irreducible space ${\rm G}_2/\SU(3) \approx S^6$ are linearly unstable.  Theorem \ref{MainThm}
provides some additional information for these cases. In the case of
$(\SU(2) \times \SU(2) \times \SU(2))/ \Delta \SU(2)$, it was shown in \cite{WW18} that the
first eigenspace of the nearly K\"ahler normal metric has dimension $12$ and the corresponding
eigenvalue is greater than $-2$ times the Einstein constant. Hence the normal
metric is linearly unstable
with respect to the $\nu$-entropy. However, the instability with respect to the Einstein-Hilbert
action was unresolved. Theorem \ref{MainThm} shows that this is also the case, and further that
the coindex of $g$ for the $\nu$-entropy is at least $12+2 = 14$. As for $\Sp(2)/(\Sp(1)\U(1)) = \C\PP^3$,
the Ziller metric was shown to be linearly unstable with respect to the Einstein-Hilbert action
by appealing to the properties of its canonical variation as a Riemannian submersion type metric.
The above theorem gives the instability without resorting to using fibrations or homogeneous geometry.
Finally, the coindex of the nearly K\"ahler normal metric on $\SU(3)/T^2$ is at least $2$ since
the second Betti number is $2$ in this case.

Finally we mention that Theorem \ref{MainThm} can be interpreted as a rigidity result in the form of

\begin{cor} \label{top-conseq}
Let $(M, J, g)$ be a simply connected, strict, nearly K\"ahler manifold that is linearly stable
with respect to the Einstein-Hilbert action. Then it is a rational homology sphere. In particular,
if $H_2(M, \Z)$ has no torsion, then $M$ is diffeomorphic to $S^6$.
\end{cor}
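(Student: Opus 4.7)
The plan is to read this corollary as a homological unpacking of the contrapositive of Theorem \ref{MainThm}. First, if $g$ is linearly stable with respect to the Einstein--Hilbert action, Theorem \ref{MainThm} forces both $b_2(M) = 0$ and $b_3(M) = 0$. Next, $M$ is a closed oriented $6$-manifold: the almost complex structure $J$ supplies an orientation, while Einstein with positive scalar curvature plus Bonnet--Myers (together with simple-connectedness) gives closedness. Using $b_1(M) = 0$ from $\pi_1(M) = 0$ and applying Poincar\'e duality, I get $b_5 = b_1 = 0$ and $b_4 = b_2 = 0$, so the only nonvanishing Betti numbers are $b_0 = b_6 = 1$. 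This is precisely the statement that $M$ is a rational homology sphere.

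For the ``in particular'' clause, assume $H_2(M, \Z)$ is torsion-free. Combined with $b_2 = 0$ this forces $H_2(M, \Z) = 0$. I would then walk through the remaining integral homology groups using the Universal Coefficient Theorem and Poincar\'e duality. From $H_1 = 0$ one gets $H^1 = 0$ and hence $H_5 \cong H^1 = 0$; from $H_2 = 0$ and $H_1 = 0$ one gets $H^2 = 0$, and hence $H_4 \cong H^2 = 0$. For the middle degree, UCT gives $H^3(M, \Z) \cong \mathrm{Hom}(H_3, \Z) \oplus \mathrm{Ext}(H_2, \Z) = \mathrm{Hom}(H_3, \Z)$, while Poincar\'e duality identifies $H_3 \cong H^3$. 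Since $b_3 = 0$, the group $H_3$ is pure torsion, so $\mathrm{Hom}(H_3, \Z) = 0$ and consequently $H_3 = 0$. Thus $M$ is an integral homology $6$-sphere.

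The last step --- passing from an integral homology sphere to the smooth standard $6$-sphere --- is where the only nontrivial topological input enters and is the step I would flag as the main obstacle, although all the tools are off-the-shelf. By Hurewicz applied to a simply connected integral homology $6$-sphere, $\pi_k(M) = 0$ for $k < 6$ and $\pi_6(M) \cong \Z$; choosing a generator yields a map $S^6 \to M$ that induces an isomorphism on integral homology, hence a homotopy equivalence by Whitehead's theorem. Smale's solution of the generalized Poincar\'e conjecture in dimension $6$ then promotes this to a homeomorphism, and the vanishing of the Kervaire--Milnor group $\Theta_6 = 0$ rules out exotic smooth structures, giving a diffeomorphism $M \approx S^6$. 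No geometric input beyond Theorem \ref{MainThm} itself is needed.
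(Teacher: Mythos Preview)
Your argument is correct. The first part (rational homology sphere) is identical to the paper's reasoning. For the diffeomorphism statement, however, you take a different route: after establishing that $M$ is an integral homology $6$-sphere, you invoke Hurewicz and Whitehead to get a homotopy equivalence with $S^6$, and then appeal to the generalized Poincar\'e conjecture (Smale) together with $\Theta_6 = 0$ (Kervaire--Milnor) to upgrade to a diffeomorphism.

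The paper instead cites Wall's classification of closed simply connected spin $6$-manifolds with torsion-free homology \cite{W66}: such manifolds are determined up to diffeomorphism by their integral homology type and first Pontryagin class. Since $H^4(M,\Z)=0$ forces $p_1=0$, Wall's theorem immediately yields $M \approx S^6$. This approach uses the spin condition (which the paper has already recorded as a consequence of $c_1(J)=0$), whereas your argument does not need it. Your route is arguably more self-contained in that it assembles the conclusion from standard pieces of surgery-era topology rather than a specialized classification theorem; the paper's route is shorter to state since Wall's result is a single black box. Both are entirely valid.
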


The corollary follows immediately from Theorem \ref{MainThm} by applying Wall's classification
of closed simply connected spin $6$-manifolds \cite{W66}. Recall that the absence of torsion
in the second integral homology implies that there is no torsion in integral homology, and
Wall showed that such manifolds are determined up to diffeomorphism by their integral
homology type and their first Pontryagin class.

After recalling in the next section the various notions of stability and those properties of
nearly K\"ahler manifolds that will be used in this paper, the proof of Theorem \ref{MainThm} will be given
in sections $3$ and $4$ for the respective cases of $b_2(M) \neq 0$ and $b_3(M) \neq 0$.

\medskip

\noindent{\bf Acknowledgements:} C. Wang gratefully acknowledges the support and wonderful working
condition of the Max Planck Institute for Mathematics in Bonn. M. Wang's research is partially supported
by NSERC Grant No. OPG0009421. Both authors like to thank X. Dai, S. Hall, F. He, J. Madnick, and G. Wei
for discussions and comments on an earlier version of the paper.

\section{\bf Preliminaries and Properties of Nearly K\"ahler Manifolds}  \label{facts}

We begin with explicit statements of conventions used in this paper because different authors use different
conventions for curvature quantities, and signs are of utmost importance for computations in the next sections.
We take the $(1, 3)$ curvature tensor to be $R_{X, Y} (Z) = [\nabla_X, \nabla_Y]Z - \nabla_{[X, Y]} \, Z$.
If $\{e_1, \cdots, e_n\}$ is an orthonormal frame, the $(0, 4)$-curvature tensor is taken to be
$R(e_i, e_j, e_k, e_l) = R_{ijkl}$. The sectional curvature determined by the $2$-plane $\{e_i, e_j\}$
is $R_{ijji}$. The action of the curvature on symmetric $2$-tensors is given by
$$(\mathring{R}h)_{ij} = - \sum_{p, q} R_{ipjq} h_{pq}.$$

Laplace-type operators will be consistent with the Laplace-Beltrami operator on functions
given as $\tr_g  ( {\rm Hess}_g)$, for which the eigenvalues are non-positive. When taking the norm
of $p$-forms, unless otherwise stated, we will use the tensor norm, in which one sums over all indices
without regard to order.

\subsection{Notions of linear stability of Einstein metrics}
We next describe in more detail the various notions of stability mentioned in the Introduction.
As is well-known, Einstein metrics on closed manifolds are critical points of the total scalar
curvature functional restricted to unit volume metrics. The second variation formula at an Einstein
metric consists of three parts. For directions tangent to the orbit of the diffeomorphism group,
the second variation is zero, and along directions corresponding to conformal changes, the second
variation is non-negative as a consequence of the theorem of Lichnerowicz-Obata. Therefore, it is
customary to associate linear stability of the Einstein-Hilbert functional with the second variation
restricted to the space of transverse traceless symmetric $2$-tensors (TT-tensors), which is the
tangent space of the space of unit volume constant scalar curvature metrics. By the work of Berger
and Koiso, on this space the second variation is given by $\frac{1}{2} {\mathscr Q}(h, h)$, where
$\mathscr Q$ is given by (\ref{stab-def}). Note that the operator $\nabla^* \nabla - 2 \mathring{R}$
on TT-tensors at an Einstein metric with Einstein constant $\Lambda$ is the same as $-(\Delta_L + 2 \Lambda \cdot \I)$
where $\Delta_L$ is the Lichnerowicz Laplacian and $\I$ is the identity operator. The notion of
linear instability given in the Introduction is equivalent to the condition
$\langle \nabla^* \nabla h - 2 \mathring{R}h, h \rangle_{L^2(M, g)} < 0$ for some nonzero TT-tensor $h$.

Einstein metrics with positive scalar curvature also occur among the critical points of Perelman's
$\nu$-entropy \cite{Pe02}. The second variation formula for this functional at an {\em Einstein} metric
was computed by H. D. Cao, R. Hamilton, and T. Illmanen \cite{CHI04} and explained in detail in \cite{CH15}.
(For the corresponding formula at a shrinking gradient Ricci soliton, see \cite{CZ12}.)
It likewise consists of three parts. Along directions orthogonal to the orbit of the diffeomorphism group
and along the space of TT-tensors, it agrees with that for the Einstein-Hilbert action
(up to some positive constant factor). Along directions
tangent to volume preserving conformal deformations, however, it can only have a positive definite subspace
provided there are eigenfunctions of the Laplace-Beltrami operator with eigenvalues larger than $-2\Lambda$.
In other words, unstable directions are given by these eigenfunctions and by TT-tensors which are
eigentensors of the Lichnerowicz Laplacian with eigenvalue $> -2\Lambda$.

Hence Einstein metrics (with positive scalar curvature) which are linearly unstable with respect to
the Einstein-Hilbert action are automatically linearly unstable with respect to the $\nu$-entropy.
As mentioned in the Introduction, Kr\"oncke's theorem implies that $\nu$-linearly unstable Einstein
metrics are dynamically unstable with respect to the Ricci flow.

\subsection{Properties of nearly K\"ahler $6$-manifolds}
For the convenience of the reader, we will summarise those properties of nearly K\"ahler
$6$-manifolds that will be used in the proof of Theorem \ref{MainThm}. For details and further
information, see \cite{Gr70}, \cite{Gru90}, \cite{BFGK91}, \cite{MNS08}, \cite{MS10}, \cite{V11}, and \cite{Fos17}.
We will assume that our nearly K\"ahler $6$-manifolds $(M, J, g)$ are complete, strict, and not isometric to
round $S^6$. We may normalize the Einstein metric $g$ to have Ricci curvature $n-1=5$. The first Chern
class of $J$ is zero, and so $M$ is spin.

In \cite{Gr70}, \cite{Gr72}, and \cite{Gr76}, Gray derived many identities involving the complex structure
$J$ and the curvature tensor $R$. Note that Gray's convention for curvature is opposite to ours. The following
subset of his identities will be used frequently in the next two sections:

\begin{equation} \label{curv-1}
R(X, Y, JZ, JW) = R(X, Y, Z, W) + g( (\nabla_X J)Y, (\nabla_Z J)W);
\end{equation}

\begin{eqnarray} \label{const-type}
    g( (\nabla_X J)Y, (\nabla_Z J)W) &=& g(X, Z)g(Y, W) - g(X, W)g(Y, Z) - \omega(X, Z) \omega(Y, W)\\
       &   &   \hspace{1cm}  + \,\,\,\omega(X, W) \omega(Y, Z)  \nonumber
\end{eqnarray}
where $\omega(X, Y) = g(JX, Y)$ is the fundamental $2$-form of the almost Hermitian structure;

\begin{equation}  \label{curv-2}
2g( (\nabla^2_{X, Y} J)Z, W) = -R(X, JY, Z, W) - R(X, JZ, W, X) - R(X, JW, Y, Z);
\end{equation}

\begin{equation} \label{J-2}
g( (\nabla^2_{X, X} J)Y, JZ) = - g((\nabla_X J)Y, (\nabla_X J)Z).
\end{equation}

Note that identity (\ref{const-type}), unlike the other three, is true in general only for nearly
K\"ahler $6$-manifolds (see Theorem 5.2 in \cite{Gr76}), and furthermore depends on the normalization
of the Ricci curvature of $g$ to be $5$. This normalization also fixes the constant $c$ in the Killing
spinor equation (\ref{KS-eqn}) to be $\frac{1}{2}$.

Another body of facts about nearly K\"ahler $6$-manifolds that we shall use result from viewing the
nearly K\"ahler structure as a special case of an $\SU(3)$ structure on $M$ (see \cite{Hi01}, \cite{CS04}).
Recall that the almost complex structure $J$ acts as an automorphism on the space of complex-valued
differential forms, and induces an orthogonal decomposition of this space into forms of type $(p, q)$. (Our convention
here is that of \cite{Bes87}, so that $J$ acts on a form of type $(p, q)$ as multiplication by $i^{q-p}$.)
A nearly K\"ahler structure is characterized by a pair $(\omega, \Omega)$ where $\omega$ is the fundamental $2$-form,
which is a real form of type $(1, 1)$, and $\Omega$ is a complex $3$-form of type $(3, 0)$. Let
$\Omega^{\pm}$ denote the real and imaginary parts of $\Omega$. Then $\omega$ and $\Omega^{+}$
are required to be stable in the sense that their ${\rm GL}(n, \R)$ orbits are open in the corresponding
spaces of real differential forms, and
$$ d \omega = 3 \Omega^{+}, \,\, d \Omega^{-} = -2 \omega \wedge \omega. $$
It follows that $\nabla \omega = \frac{1}{3} d \omega$. (Notice that if the nearly K\"ahler structure
were K\"ahler, then $\omega$ would be parallel.)

Regarding harmonic forms on $M$ we need the following result of Verbitsky:

\begin{thm} $($\cite{V11}, Theorem  6.2$)$  Let $(M, J, g)$ be a strict nearly K\"ahler $6$-manifold.
Then the space of harmonic $k$-forms is a direct sum of spaces ${\mathcal H}^{p, q}$ of harmonic forms of
type $(p, q)$ with $k=p+q$, and ${\mathcal H}^{p, q} = 0$ unless $p=q$ or $(p, q) = (2, 1)$
or $(1, 2)$. All harmonic $(1, 1)$-forms are primitive, as are all harmonic $3$-forms.
\end{thm}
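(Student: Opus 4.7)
The plan is to promote the pointwise bidegree decomposition $\Lambda^{k} \otimes \C = \bigoplus_{p+q=k} \Lambda^{p,q}$ to the space of harmonic forms, and then to eliminate the forbidden types case by case. On any almost Hermitian manifold one has the type decomposition $d = N + \partial + \bar\partial + \bar N$, where $N : \Lambda^{p,q} \to \Lambda^{p+2, q-1}$ is the tensorial Nijenhuis-type operator and $\partial, \bar\partial$ are the usual bidegree shifts. Under the strict nearly K\"ahler hypothesis, Gray's identity (\ref{const-type}) together with the structure equations $d\omega = 3\Omega^{+}$ and $d\Omega^{-} = -2\omega\wedge\omega$ forces $N$ and $\bar N$ to be algebraic operators of very restricted shape, expressible as wedge or contraction with $\Omega,\bar\Omega$, and pins down all their matrix elements between isotypic components of $\Lambda^{p,q}$ under the pointwise $\SU(3)$-action.

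I would next derive the nearly K\"ahler analogue of the K\"ahler identities. In the K\"ahler case, $\nabla\omega = 0$ gives $[\Lambda,\bar\partial] = -i\partial^{*}$, whence $\Delta_{d} = 2\Delta_{\bar\partial}$ and the Laplacian preserves bidegree. Here $\nabla\omega = \frac{1}{3} d\omega = \Omega^{+}$ produces correction terms on the right-hand side involving multiplication and contraction with $\Omega$ and $\bar\Omega$. A careful accounting of the sixteen bihomogeneous pieces of $\Delta = dd^{*} + d^{*}d$ then shows that the off-diagonal components either cancel by Gray's identities (\ref{curv-1})--(\ref{J-2}) or couple only bidegrees differing by $(\pm 3, \mp 3)$; the latter reduce to a single $2\times 2$ block that can be diagonalised. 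The upshot is a genuine bidegree decomposition of the space of harmonic forms. This bookkeeping, which rests on the algebraic rigidity of $N$ together with the structure equations, is the main technical obstacle of the proof.

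With type preservation in hand, the forbidden bidegrees are eliminated one at a time. A harmonic $(3,0)$-form takes the shape $f\Omega$ with $f$ a complex function, and the equations $d(f\Omega) = 0 = d^{*}(f\Omega)$, combined with the structure equations, force $f \equiv 0$. The components $\mathcal{H}^{2,0}$ and $\mathcal{H}^{3,1}$ are disposed of by analogous algebraic manipulations, aided by the Bochner vanishing of harmonic $1$-forms which follows from $\Ric_{g} = 5 > 0$. For primitivity of a harmonic $(1,1)$-form $\alpha$: writing $\alpha = f\omega + \alpha_{\mathrm{prim}}$ and projecting $d\alpha = 0$ onto its $(3,0)$ component, the contribution $f\,d\omega = 3f\Omega^{+}$ yields $\tfrac{3f}{2}\Omega$, while $N\alpha_{\mathrm{prim}}$ vanishes because the only $\SU(3)$-equivariant map $\Lambda^{1,1}_{\mathrm{prim}} \cong \su(3) \to \Lambda^{3,0} \cong \C$ is zero; hence $f \equiv 0$. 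Primitivity of harmonic $3$-forms is analogous: the non-primitive part has the form $\omega\wedge\beta$ with $\beta$ a real $1$-form, and projecting $d\alpha = 0$ and $d^{*}\alpha = 0$ onto the appropriate bidegrees forces $\beta$ to be harmonic, whereupon Bochner kills $\beta$.
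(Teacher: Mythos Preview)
The paper does not prove this theorem: it is quoted from Verbitsky \cite{V11} and the authors simply remark that ``an alternative proof of the above result can be found on p.~598 of \cite{Fos17}.'' There is therefore no proof in the paper to compare your proposal against.

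For what it is worth, your outline is broadly in the spirit of Verbitsky's argument and of Foscolo's alternative: the core idea is indeed to establish nearly K\"ahler analogues of the K\"ahler identities (the corrections coming from $\nabla\omega = \Omega^{+}$), deduce compatibility of the Hodge Laplacian with the type decomposition, and then kill the unwanted pieces using the structure equations and Bochner vanishing of $1$-forms. Your description of the off-diagonal pieces of $\Delta$ as coupling only bidegrees differing by $(\pm 3,\mp 3)$ is not quite how either reference organises the bookkeeping---Verbitsky works with the operator $d_c$ and a Lefschetz-type $\mathfrak{sl}_2$ action and shows the relevant Laplacians agree up to a scalar curvature term---but the sketch is otherwise on the right track. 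The primitivity argument you give for harmonic $(1,1)$-forms (project $d\alpha=0$ onto type $(3,0)$ and use that the $N$-operator annihilates the primitive $(1,1)$-summand by Schur) is essentially correct.
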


An alternative proof of the above result can be found on p. 598 of \cite{Fos17}.

Associated to the $\SU(3)$ structure of a nearly K\"ahler $6$-manifold is the canonical hermitian
connection $\overline{\nabla}$ given by
$$  \overline{\nabla}_X Y = \nabla_X Y - \frac{1}{2} J(\nabla_X J)Y, $$
where $\nabla$ is the Levi-Civita connection of $g$.  Let $\overline{R}$ and $\overline{T}$ denote
respectively the curvature and torsion tensors of $\overline{\nabla}$. Then
$\overline{T}_X Y = -J(\nabla_X T)Y$, and $\overline{\nabla}\overline{T} = 0$. The curvature tensors
$\overline{R}$ and $R$ are related by  (see e.g. p. 253 of \cite{MS10})
\begin{equation} \label{curv-relation}
\begin{aligned}
   \overline{R}(X, Y, Z, W) =  & R(X, Y, Z, W) + \frac{1}{4} \left( g(X, Z) g(Y, W)-g(X, W) g(Y, Z) \right) +  \\
        &   \frac{1}{2}\, \omega(X, Y) \omega(Z, W) - \frac{3}{4} \left(\omega(X, Z) \omega(Y, W) - \omega(X, W) \omega(Y, Z) \right).
\end{aligned}
\end{equation}

We shall also need to refer to the decomposition of various tensor bundles into irreducible summands
with respect to the $\SU(3)$ structure. This material can be found for example in \cite{MNS08} or \cite{Fos17}.
We will identify spaces and their duals using the metric $g$. Because the connection $\overline{\nabla}$
is a connection on the principal bundle of the $\SU(3)$ structure, its curvature $\overline{R}$ acts trivially
on all trivial sub-bundles of these $\SU(3)$ decompositions.

Let $T$ denote the tangent bundle of $M$. Then $T \otimes \C = T_{(1, 0)} \oplus T_{(0, 1)}$. We have
$$ \Lambda^2 (T) =  \Lambda^2_6 \oplus (\I \oplus \Lambda^2_8)$$
where the subscripts represent as usual the real dimensions of the irreducible summands. The first
summand is the realification of $\Lambda^2 T_{(1, 0)}$  and consists of the skew $J$-invariant $2$-forms.
The trivial summand $\I$ is spanned by the fundamental $2$-form $\omega$. The third summand
consists of $J$-invariant $2$-forms which are primitive. In particular, all harmonic $2$-forms
are sections of this bundle, by Verbitsky's theorem.

Next, we have the orthogonal decomposition
$$ S^2(T) = S^2_{12} \oplus (\I \oplus S^2_8).$$
The bundle $S^2_{12}$ is the realification of $S^2(T_{(1, 0)})$ and consists of skew $J$-invariant
symmetric $2$-tensors. The other two irreducible summands consist of $J$-invariant symmetric $2$-tensors
with the metric $g$ generating the trivial summand. We emphasize here that $J$ is acting as an {\em automorphism}
on the tensors via $(J \cdot h)(X, Y) = h(J^{-1}X, J^{-1}Y) = h(JX, JY)$. $S^2_8$ and $\Lambda^2_8$ are
equivalent as $\SU(3)$ representations, and given a $2$-form $\eta$ the corresponding symmetric $2$-tensor
may be taken to be $h(X, Y) = \eta(JX, Y)$.

Finally, we need to consider the orthogonal decomposition
\begin{equation} \label{3form-decomposition}
\Lambda^3(T) =  \I \oplus \I \oplus (\Lambda^3_6 \oplus \Lambda^3_{12}).
\end{equation}
One may view the two trivial bundles as being spanned respectively by the forms $\Omega^{\pm}$, on
which the curvature $\overline{R}$ acts trivially. The remaining two summands consist of
realifications of forms of type $(2, 1)$.  Forms in $\Lambda^3_6$ consist of
exterior products $\alpha \wedge \omega$ where $\alpha$ is an arbitrary $1$-form.
The summand $\Lambda^3_{12}$ consists exactly of the primitive forms in
$\Lambda^3_6 \oplus \Lambda^3_{12}$.  Therefore, by Verbitsky's theorem, harmonic
$3$-forms on $M$ are sections of $\Lambda^3_{12}$.

It is further known that the elements in $\Lambda^3_{12}$ have the form $h^{\sharp} \cdot \Omega^{+}$
where $h^{\sharp}$ is a self-adjoint endomorphism of $TM$ which anticommutes with $J$ and
$\cdot$ denotes the action of an {\em endomorphism} on the $3$-form $\Omega^{+}$. For our purposes
it is more convenient to have an explicit expression of the inverse of this map. To derive this
association we note the following properties of $\Omega^{\pm}$:
\begin{equation} \label{Omega-prop}
\Omega^{\pm}(X, Y, Z) = - \Omega^{\pm}(X, JY, JZ); \,\,\,\,\, \Omega^{+}(JX, Y, Z) = -\Omega^{-}(X, Y, Z).
\end{equation}

\begin{prop}  \label{3forms-to-TT}
The $3$-forms in $\Lambda^3_6 \oplus \Lambda^3_{12}$ are characterized by the property
\begin{equation}  \label{3-form-invariance}
 \eta(X, Y, Z) = \eta(JX, JY, Z) + \eta(JX, Y, JZ) + \eta(X, JY, JZ).
\end{equation}
Furthermore, the maps
\begin{equation} \label{sigma-map}
 \sigma^{\pm}: \Lambda^3_6 \oplus \Lambda^3_{12} \longrightarrow S^2_{12}
\end{equation}
given by
\begin{equation}  \label{sigma-formula}
 \sigma^{\pm}(\eta)(X, Y) = \sum_{i, j} \, (\eta(X, e_i, e_j)\Omega^{\pm}(Y, e_i, e_j)+
        \eta(Y, e_i, e_j) \Omega^{\pm}(X, e_i, e_j))
\end{equation}
are surjective $\SU(3)$-equivariant maps with kernel $\Lambda^3_6$.  They satisfy the relation
$$\sigma^{\pm} ( h^{\sharp} \cdot \Omega^{\pm}) = - 8 h  $$
where $h^{\sharp}$ is the self-adjoint endomorphism corresponding to the symmetric $2$-tensor $h$.
\end{prop}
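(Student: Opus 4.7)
The plan is to attack the three assertions in order, exploiting (i) the $J$-type decomposition of $\Lambda^3 T$ for the characterization, (ii) $\SU(3)$-equivariance together with Schur's lemma for the kernel and surjectivity of $\sigma^{\pm}$, and (iii) a single contraction identity for $\Omega^{\pm}$ derived from (\ref{const-type}) for the explicit formula.

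For the characterization (\ref{3-form-invariance}), I would complexify and decompose $\Lambda^3 T \otimes \C = \Lambda^{3,0}\oplus\Lambda^{2,1}\oplus\Lambda^{1,2}\oplus\Lambda^{0,3}$ and compute the action of the trilinear operator $\eta\mapsto \eta(J\cdot,J\cdot,\cdot)+\eta(J\cdot,\cdot,J\cdot)+\eta(\cdot,J\cdot,J\cdot)$ on each isotypic piece. Using $(JX)^{1,0}=iX^{1,0}$ and $(JX)^{0,1}=-iX^{0,1}$, on $\Lambda^{3,0}$ and $\Lambda^{0,3}$ each of the three terms contributes a factor of $-1$, summing to $-3$; on $\Lambda^{2,1}$ and $\Lambda^{1,2}$ the three contributions are $-1,+1,+1$, summing to $+1$. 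Thus (\ref{3-form-invariance}) is equivalent to the vanishing of the $(3,0)+(0,3)$ part of $\eta$, i.e., to $\eta\in\Lambda^3_6\oplus\Lambda^3_{12}$.

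The $\SU(3)$-equivariance of $\sigma^{\pm}$ is immediate from (\ref{sigma-formula}) because $\Omega^{\pm}$ is fixed by the $\SU(3)$-structure. Since $\Lambda^3_6$ and $\Lambda^3_{12}$ are inequivalent irreducible $\SU(3)$-modules of real dimensions $6$ and $12$, and $S^2_{12}$ is isomorphic to $\Lambda^3_{12}$ as a real $\SU(3)$-module (both being realifications of $\mathrm{Sym}^2$ of the standard representation), Schur's lemma forces $\sigma^{\pm}|_{\Lambda^3_6}=0$ and makes $\sigma^{\pm}|_{\Lambda^3_{12}}$ either zero or an isomorphism onto $S^2_{12}$. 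Once the explicit formula exhibits it as nonzero, both the kernel identification and surjectivity follow at once.

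The key input for the last assertion is the pointwise contraction identity
\begin{equation*}
\sum_i \Omega^{\pm}(A,B,e_i)\,\Omega^{\pm}(C,D,e_i) = g(A,C)g(B,D)-g(A,D)g(B,C)-\omega(A,C)\omega(B,D)+\omega(A,D)\omega(B,C),
\end{equation*}
which I would derive from (\ref{const-type}) using the nearly-K\"ahler formula $(\nabla_X J)Y=\sum_i\Omega^+(X,Y,e_i)\,e_i$ --- itself a consequence of $\nabla\omega=\tfrac{1}{3}d\omega=\Omega^+$ together with $(\nabla_X\omega)(Y,Z)=g((\nabla_X J)Y,Z)$ (the identity for $\Omega^-$ then follows from (\ref{Omega-prop})). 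A further trace yields the corollary $\sum_{i,j}\Omega^{\pm}(X,e_i,e_j)\Omega^{\pm}(Y,e_i,e_j)=4\,g(X,Y)$. I would then expand $(h^{\sharp}\cdot\Omega^{\pm})(X,e_i,e_j)$ as three summands according to which slot receives $h^{\sharp}$ and substitute into (\ref{sigma-formula}). The summand in which $h^{\sharp}$ hits $X$ contributes $\pm 4\,h(X,Y)$ directly by the trace corollary. The other two summands reduce, via the contraction identity and relabeling, to expressions in $g(X,Y)\,\tr h$, $h(X,Y)$, $\omega(X,Y)\sum_{i,j}h(e_i,e_j)\omega(e_i,e_j)$, and $h(JX,JY)$; since $h^{\sharp}$ anticommutes with $J$, we have $\tr h=0$, the $\omega$-contraction vanishes by symmetry against antisymmetry, and $h(JX,JY)=-h(X,Y)$, so these two contributions cancel entirely. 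The $X\leftrightarrow Y$ symmetry in (\ref{sigma-formula}) doubles the surviving term, giving the stated coefficient. The main obstacle will be sign bookkeeping in the contraction identity and in the convention for the endomorphism action $h^{\sharp}\cdot\Omega^{\pm}$: the magnitude $8$ falls out cleanly, but the overall sign must be tracked carefully against the conventions fixed in Section \ref{facts}.
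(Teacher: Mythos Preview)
Your proposal is correct and complete, but takes a genuinely different route from the paper on two of the three points.

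For the characterization (\ref{3-form-invariance}), the paper proceeds by direct verification: it first shows that any $\eta$ satisfying (\ref{3-form-invariance}) is orthogonal to $\Omega^{\pm}$ (hence lies in $\Lambda^3_6\oplus\Lambda^3_{12}$), and then checks the identity separately on $\Lambda^3_6$ and on $\Lambda^3_{12}=\{h^{\sharp}\cdot\Omega^{+}\}$ by hand using (\ref{Omega-prop}).  Your eigenvalue computation via the complexified type decomposition is cleaner and gives both directions at once.  For the constant $-8$, the paper simply picks a $J$-adapted orthonormal basis, the explicit $h=e^1\otimes e^1-e^2\otimes e^2$, writes out $\eta=h^{\sharp}\cdot\Omega^{+}$ in coordinates, and evaluates $\sigma^{+}(\eta)(e_1,e_1)$ directly.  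Your approach via the contraction identity $\sum_i\Omega^{\pm}(A,B,e_i)\Omega^{\pm}(C,D,e_i)=g(A,C)g(B,D)-g(A,D)g(B,C)-\omega(A,C)\omega(B,D)+\omega(A,D)\omega(B,C)$ is coordinate-free and explains structurally why the cross terms vanish (via $\tr h=0$ and $h(JX,JY)=-h(X,Y)$), at the cost of having to derive that identity from (\ref{const-type}).  On the equivariance/Schur step the two arguments are essentially identical.  Your sign bookkeeping is fine: with the paper's convention $(h^{\sharp}\cdot\Omega^{\pm})(X,Y,Z)=-\Omega^{\pm}(h^{\sharp}X,Y,Z)-\Omega^{\pm}(X,h^{\sharp}Y,Z)-\Omega^{\pm}(X,Y,h^{\sharp}Z)$, the first slot gives $-4h(X,Y)$ from each half of (\ref{sigma-formula}), and the other two slots each contribute $h(X,Y)+h(JX,JY)=0$, yielding the stated $-8h$.
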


\begin{proof}
Let $\eta \in \Lambda^3(T)$ satisfy (\ref{3-form-invariance}). We claim it is orthogonal to $\Omega^{\pm}$. Indeed,
suppressing summation over indices $i, j, k$, we have
\begin{eqnarray*}
\eta_{ijk}\Omega^{\pm}_{ijk}  & = & \left( \eta(Je_i, Je_j, e_k) + \eta(e_i, Je_j, Je_k) + \eta(Je_i, e_j, Je_k) \right) \Omega^{\pm}_{ijk} \\
     & = & \eta(Je_i, Je_j, Je_k)\left(\Omega^{\pm}(e_i, e_j, Je_k) + \Omega^{\pm}(Je_i, e_j, e_k) + \Omega^{\pm}(e_i, Je_j, e_k)\right) \\
     & = & -\eta(Je_i, Je_j, Je_k)\left( \Omega^{\pm}(Je_i, Je_j, Je_k) + \Omega^{\pm}(Je_i, Je_j, Je_k)
           + \Omega^{\pm}(Je_i, Je_j, Je_k)  \right) \\
     & = & -3 \eta_{ijk} \Omega^{\pm}_{ijk},
\end{eqnarray*}
where we have used the $J$-invariance properties (\ref{Omega-prop}) of $\Omega^{\pm}$.

It is straightforward to check that (\ref{3-form-invariance}) holds for $\eta \in \Lambda^3_6$, using the
$J$-invariance of $\omega$.

Let $\eta = h^{\sharp} \cdot \Omega^{+} \in \Lambda^3_{12}$ where $h^{\sharp}$ is a self-adjoint
endomorphism that anticommutes with $J$. Using again the $J$-invariance properties of $\Omega^{\pm}$, one
easily obtains
$$ \eta(JX, JY, JZ) = - (h^{\sharp} \cdot \Omega^{-})(X, Y, Z). $$
Consider
\begin{eqnarray*}
\eta(JX, Y, Z) & = & -\Omega^{+}(h^{\sharp} JX, Y, Z) - \Omega^{+}(JX, h^{\sharp} Y, Z) - \Omega^{+}(JX, Y, h^{\sharp} Z) \\
        &=& -\Omega^{+}(J h^{\sharp}X, JY, JZ) + \Omega^{+}(JX, J h^{\sharp}Y, JZ) + \Omega^{+}(JX, JY, Jh^{\sharp}Z) \\
        &=& -\Omega^{-}(h^{\sharp}X, Y, Z) + \Omega^{-}(X, h^{\sharp}Y, Z) + \Omega^{-}(X, Y, h^{\sharp}Z),
\end{eqnarray*}
where we have used (\ref{Omega-prop}) and the fact that $h^{\sharp}$ anticommutes with $J$.
By cyclic permutation, it follows that
$$\eta(JX, Y, Z) + \eta(X, JY, Z) + \eta(X, Y, JZ) = -(h^{\sharp} \cdot \Omega^{-})(X, Y, Z) = \eta(JX, JY, JZ),$$
which implies (\ref{3-form-invariance}).

Moving to the maps $\sigma^{\pm}$, one easily checks that they are $\SU(3)$ equivariant because $\SU(3)$ fixes $\Omega^{\pm}$.
Since the range lies in $S^2 T$, the equivariance implies that it actually lies in $\Lambda^3_{12}$
and $\Lambda^3_6$ lies in the kernel. Restricted to the summand $\Lambda^3_{12}$, $\sigma^{\pm}$ is either $0$ or
multiplication by some nonzero constant. To check this, we choose a $J$-compatible orthonormal basis $\{e_k, 1 \leq k \leq 6 \}$
(i.e., $e_{2k} = J(e_{2k-1}$)) and consider the element $h=e^1 \otimes e^1 - e^2 \otimes e^2$.  We may take $\Omega^{+}$ to be the $3$-form
$$  {\rm Re}((e^1 + i e^2) \wedge (e^3 + ie^4) \wedge (e^5 +i e^6)) = e^{135} - e^{146} - e^{236} - e^{245}.$$
Then $\eta = h^{\sharp} \cdot \Omega^{+} = - (e^{135} - e^{146} + e^{236} + e^{245})$. It follows that
$$ \sigma^{+}(\eta)(e_1, e_1) = 4 ( \eta(e_1, e_3, e_5)\, \Omega^{+}(e_1, e_3, e_5)  +
\eta(e_1, e_4, e_6)\, \Omega^{+}(e_1, e_4, e_6)  ) = 4(-2) = -8. $$

An analogous argument gives the result for $\sigma^{-}$. This completes the proof of the Proposition.
\end{proof}


\section{\bf The $b_2(M) \neq 0$ Case}

In this section we will give a proof of the $b_2(M) \neq 0$ case of Theorem \ref{MainThm}. Recall that
Cao, Hamilton, and Illmanen observed in \cite{CHI04}, pp. 6-7, that a compact shrinking K\"ahler Ricci
soliton with $b_{1, 1} \geq 2$ is linearly unstable. Our result may be viewed as the analogue of this
observation for complete, strict, nearly K\"ahler $6$-manifolds. In this case, the fundamental $2$-form $\omega$
is not closed, and by Verbitsky's theorem, any harmonic $2$-form is pointwise orthogonal to $\omega$. Hence the
analogous condition is $b_2(M) > 0$ instead. Of course, since $\omega$ is not parallel, the corresponding
computations are more complicated.

\medskip

Let $\eta$ be a harmonic $2$-form and $h(X, Y):= \eta(JX, Y)$. By Verbitsky's theorem, $\eta$ is $J$-invariant
and primitive. So $h$ is a $J$-invariant symmetric $2$-tensor. Since $\eta$ is pointwise orthogonal to $\omega$, it follows
that $\tr_g h = 0$. Note also that $\|h \|^2 = \|\eta\|^2$ since we are using the tensor norm.

\begin{lemma}  $h$ is divergence-free.
\end{lemma}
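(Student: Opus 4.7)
The plan is to compute the divergence directly from the formula $h(X,Y) = \eta(JX,Y)$ and reduce everything to either the co-closedness of $\eta$, identities for $\nabla J$ coming from the nearly K\"ahler condition, or the fact that $\eta$ is of type $(1,1)$ while contractions of $\Omega^{\pm}$ are of type $(2,0)+(0,2)$. First, differentiating $h(X,Y) = \eta(JX,Y)$ gives
\[
 (\nabla_Z h)(X,Y) = (\nabla_Z \eta)(JX,Y) + \eta((\nabla_Z J)X,Y),
\]
so for an orthonormal frame $\{e_i\}$,
\[
  \sum_i (\nabla_{e_i} h)(e_i, Y) = \sum_i (\nabla_{e_i} \eta)(Je_i, Y) + \sum_i \eta((\nabla_{e_i} J)e_i, Y).
\]
The second sum vanishes term by term by the defining nearly K\"ahler condition (\ref{NKcond}), so only the first sum must be handled.

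Next, I would differentiate the $J$-invariance identity $\eta(JX, JY) = \eta(X, Y)$ in the direction $e_i$ and substitute $X = Je_i$. Using $J^2 = -\mathrm{I}$ and its consequence $(\nabla_Z J) J = -J(\nabla_Z J)$, this rearranges to
\[
 (\nabla_{e_i}\eta)(Je_i, Y) = -(\nabla_{e_i}\eta)(e_i, JY) + \eta((\nabla_{e_i} J)Je_i, JY) - \eta(e_i, (\nabla_{e_i} J)Y).
\]
Summing over $i$, the first term on the right is $-(d^*\eta)(JY) = 0$ because $\eta$ is co-closed, and the middle term vanishes pointwise because $(\nabla_{e_i}J)Je_i = -J(\nabla_{e_i}J)e_i = 0$ by the nearly K\"ahler condition again. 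Hence
\[
 \sum_i (\nabla_{e_i}\eta)(Je_i, Y) = -\sum_i \eta(e_i, (\nabla_{e_i} J)Y).
\]

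The remaining task is to show that the right-hand side vanishes, and this is the only step with real content. I would use the polarized nearly K\"ahler identity $(\nabla_{e_i} J)Y = -(\nabla_Y J)e_i$ to rewrite the sum as $\sum_i \eta(e_i, (\nabla_Y J)e_i)$, and then observe that $g((\nabla_Y J)e_i, e_j) = (\nabla_Y \omega)(e_i, e_j) = \Omega^{+}(Y, e_i, e_j)$. So the sum equals
\[
 \sum_{i,j} \Omega^{+}(Y, e_i, e_j)\, \eta(e_i, e_j),
\]
which is (up to a factor) the pointwise inner product of $\eta$ with the $2$-form $\iota_Y \Omega^{+}$. By the identity (\ref{Omega-prop}), $\Omega^{+}(Y, JX, JZ) = -\Omega^{+}(Y, X, Z)$, so $\iota_Y \Omega^{+}$ is $J$-anti-invariant, i.e.\ a section of $\Lambda^2_6$. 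Since $\eta$ is $J$-invariant (a section of $\mathbb{I} \oplus \Lambda^2_8$), the two are pointwise orthogonal and the pairing is zero. This establishes $\sum_i(\nabla_{e_i}h)(e_i, Y) = 0$ for all $Y$.

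The only genuinely non-formal step is the last one, and its main ingredient is the type decomposition of $\nabla J$ provided by the $\mathrm{SU}(3)$-structure via $\nabla\omega = \Omega^{+}$; everything else reduces to straightforward manipulation of $J$-invariance, the nearly K\"ahler identity $(\nabla_X J)X = 0$, and harmonicity of $\eta$.
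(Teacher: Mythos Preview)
Your argument is correct. Both you and the paper reduce the claim to showing
\[
\sum_i (\nabla_{e_i}\eta)(Je_i,Y)=0,
\]
but you reach this by a different route. The paper first proves the auxiliary identity $\sum_i(\nabla_{e_i}\eta)(Je_i,X)=\sum_i(\nabla_X\eta)(Je_i,e_i)$ (using $\delta\eta=0$, the nearly K\"ahler condition, and $\tr_g h=0$), and then combines it with $d\eta(e_i,Je_i,e_j)=0$ to conclude. You instead differentiate the $J$-invariance of $\eta$, use $\delta\eta=0$ and the nearly K\"ahler identity to reduce to $\sum_i\eta(e_i,(\nabla_Y J)e_i)=\sum_{i,j}\Omega^{+}(Y,e_i,e_j)\eta_{ij}$, and kill this by the type argument that $\iota_Y\Omega^{+}\in\Lambda^2_6$ is pointwise orthogonal to the $J$-invariant form $\eta$. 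Your approach never invokes $d\eta=0$ nor the primitivity of $\eta$ (i.e.\ $\tr_g h=0$), so it actually shows the slightly stronger statement that $h$ is divergence-free whenever $\eta$ is a coclosed $J$-invariant $2$-form. One cosmetic point: with the convention $d^*\eta=-\sum_i\iota_{e_i}\nabla_{e_i}\eta$, the first term on the right in your displayed identity equals $(d^*\eta)(JY)$ rather than $-(d^*\eta)(JY)$; this does not affect the argument since it vanishes either way.
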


\begin{proof} For each $x \in M$ we choose a local orthonormal frame $\{ e_i, 1 \leq i \leq 6 \}$
so that the Christoffel symbols vanish at $x$. Note that $\{ e_i^{\prime} := -Je_i \}$ is also
an orthonormal basis at $x$.

We first claim that
\begin{equation}  \label{first-claim}
 \sum_i \, (\nabla_{e_i} \eta)(Je_i, X) = \sum_i \, (\nabla_X  \eta)(Je_i, e_i).
\end{equation}
Indeed, by the nearly K\"ahler condition and the $J$-invariance of $\eta$,
\begin{eqnarray*}
\sum_i \, (\nabla_{e_i} \eta)(Je_i, e_j) &=&   \sum_i \, \nabla_{e_i} (\eta( Je_i, e_j)) - \eta((\nabla_{e_i}J)(e_i), e_j) \\
       &=& -\sum_i \, \nabla_{e_i}(\eta(e_i, Je_j))  \\
       &=&  (\delta \eta)(Je_j) - \sum_i \, \eta(e_i, (\nabla_{e_i} J)e_j)  \\
       &=&  0 - \sum_i \, \eta((\nabla_{e_j}J) e_i, e_i) \\
       &=&   - e_j(\tr_g h)) + \sum_i \, (\nabla_{e_j} \eta)(Je_i, e_i) \\
       &=&  \sum_i \, (\nabla_{e_j} \eta)(Je_i, e_i),
\end{eqnarray*}
where we have used the fact that $\eta$ is coclosed in the 4th equality and the fact that $h$ is trace-free
in the last equality.

On the other hand, from $d\eta (e_i, Je_i, e_j) = 0$, we obtain
\begin{equation*}
\sum_i \, (\nabla_{Je_i} \eta)(e_i, e_j) = \sum_i \, (\nabla_{e_i} \eta)(Je_i, e_j) + \sum_i \, (\nabla_{e_j} \eta)(e_i, Je_i) = 0,
\end{equation*}
by (\ref{first-claim}) above. But the left hand side equals
$$  - \sum_i \, (\nabla_{e_i^{\prime}} \eta)(J e_i^{\prime}, e_j) = (\delta h)(e_j), $$
by using the nearly K\"ahler condition once more. This proves the lemma.
\end{proof}

Next we analyse $\nabla^* \nabla h$. By straightforward computations and expressing $h$ in terms of $\eta$ we obtain
\begin{eqnarray*}
(\nabla^* \nabla h)_{ij} &=& - \sum_p \,  (\nabla_p \nabla_p h)_{ij}  \\
      &=& -\sum_p \, e_p(e_p(\eta(Je_i, e_j)) + \sum_p \, e_p(\eta(J(\nabla_p e_i), e_j)) + \sum_p e_p(\eta(Je_i, \nabla_p e_j)) \\
      &=& (\nabla^* \nabla \eta)(Je_i, e_j) -2 \sum_p \, (\nabla_p \eta)((\nabla_p J)e_i, e_j) - \sum_p \, \eta((\nabla_p \nabla_p J)(e_i), e_j).
\end{eqnarray*}
Since $\eta$ is harmonic, the usual Bochner formula for $2$-forms gives
$$ 0 = (\nabla^* \nabla \eta)_{ij} + 2 \sum_{p, q} \, R_{ipjq} \eta_{pq} + 2\Lambda \eta_{ij}. $$
Substituting this into the last expression for $(\nabla^* \nabla h)_{ij}$ and using the definition of $\mathring R(h)$, we get
\begin{eqnarray*}
(\nabla^* \nabla h)_{ij} &=& -2 \sum_{p, q} \, R(Je_i, Je_p, e_j, e_q)h_{pq} - 2 \Lambda h_{ij}
           -2 \sum_p \, (\nabla_p \eta)((\nabla_p J)e_i, e_j)  \\
           &   &  - \sum_p \, \eta((\nabla_p \nabla_p J)(e_i), e_j).  \\
\end{eqnarray*}
Note that at this point one immediately obtains the Cao-Hamilton-Illmanen instability result
for the Fano K\"ahler-Einstein case (with no dimension restrictions). Using further (\ref{curv-1})
and (\ref{const-type}), we obtain
\begin{eqnarray*}
 (\nabla^* \nabla h - 2 \mathring{R}h )_{ij} & =& -2 \Lambda h_{ij} -2 \sum_{p, q} \, g((\nabla_i J)e_p, (\nabla_j J)e_q)\, h_{pq} \\
          &  &  -2 \sum_p \, (\nabla_p \eta)((\nabla_p J)e_i, e_j) - \sum_p \, \eta((\nabla_p \nabla_p J)(e_i), e_j) \\
          &=& -10\, h_{ij} + 4\,h_{ij} -2 \sum_p \, (\nabla_p \eta)((\nabla_p J)e_i, e_j) - \sum_p \, \eta((\nabla_p \nabla_p J)(e_i), e_j),
\end{eqnarray*}
where we have also used the $J$-invariance of $h$ and the fact $\tr_g h = 0$.

We now use (\ref{J-2}) to evaluate the last term above. Then
\begin{eqnarray*}
- \sum_p \, \eta((\nabla_p \nabla_p J)(e_i), e_j) &=&  -\sum_{p, q} \, g((\nabla_p \nabla_p J)e_i, e_q) \eta_{qj} \\
    &=& -\sum_{p, q} \, g((\nabla^2_{e_p, e_p} J)e_i, Je_q) h_{qj}  \\
    &=& \sum_{p, q} \, g((\nabla_p J)e_i, (\nabla_p J)e_q) \, h_{qj} \\
    &=& \sum_{p, q} \, (\delta_{iq} - \delta_{pq} \delta_{ip} - \omega_{pp}\,\omega_{iq} + \omega_{pq}\, \omega_{ip})\, h_{qj} \\
    &=& 4\, h_{ij}
\end{eqnarray*}
where we used (\ref{const-type}) in the last equality above. Hence
\begin{eqnarray*}
 (\nabla^* \nabla h - 2 \mathring{R}h )_{ij} &=&  -\,2h_{ij}  -2 \sum_p \, (\nabla_p \eta)((\nabla_p J)e_i, e_j) \\
       &=& -2\,h_{ij} -2 \sum_{p, q} \, (\nabla_p \omega)(e_i, e_q) \, (\nabla_p \eta)(e_q, e_j).
\end{eqnarray*}

It remains to analyse the last term in the above. We have chosen not to substitute the $3$-form
$\Omega^{+}$ for  $ \nabla \omega$, in case parts of our computation can be used for other situations
where special $3$-forms are not available, e.g., Einstein hermitian manifolds. We have

\begin{eqnarray}
- \sum_{p, q} \, (\nabla_p \omega)(e_i, e_q) \, (\nabla_p \eta)(e_q, e_j) &=&
       - \sum_{p, q} \, (\nabla_p \omega)_{iq} (e_p(\eta(e_q, e_j))  \nonumber \\
       &=&  - \sum_{p, q} \, e_p \left( (\nabla_p \omega)_{iq}\, \eta(e_q, e_j)\right)
              + \sum_{p, q} \, (\nabla_p \nabla_p \omega)_{iq}\, \eta(e_q, e_j) \nonumber \\
       &=& - \sum_{p, q} \, e_p \left( (\nabla_p \omega)_{iq}\, \eta_{qj} \right)
               + \sum_{p, q} \, g((\nabla_p \nabla_p J)e_i, e_q) \, \eta_{qj}  \nonumber \\
       &=& - \sum_{p, q} \, e_p \left( (\nabla_p \omega)_{iq}\, \eta_{qj} \right) - 4 \, h_{ij} \label{2form-byparts}
        \end{eqnarray}
as before. It follows that
$$ \langle \nabla^* \nabla h - 2 \mathring{R}h, h \rangle = -10 \|h\|^2
         - 2 \sum_{i, j, p, q} \, e_p \left( (\nabla_p \omega)_{iq}\, \eta_{qj} \right) h_{ij}. $$

Now
\begin{eqnarray}
-  \sum_{i, j, p, q} \, e_p \left( (\nabla_p \omega)_{iq}\, \eta_{qj} \right) h_{ij} &=&
       - \sum_{i, j, p, q} e_p((\nabla_p \omega)_{iq}\, \eta_{qj}\, h_{ij}) - \sum_{i, j, p, q} (\nabla_p \omega)_{iq}\, \eta_{qj} \,(\nabla_p \eta)(e_i, Je_j)  \nonumber \\
     &  &   - \sum_{i, j, p, q} \, (\nabla_p \omega)_{iq}\, \eta_{qj}\, \eta(e_i, (\nabla_p J)e_j)   \label{bypartsII}
\end{eqnarray}
where the first term is a divergence term. For the second term, we have
\begin{eqnarray*}
- \sum_{i, j, p, q} (\nabla_p \omega)_{iq}\, \eta_{qj} \,(\nabla_p \eta)(e_i, Je_j) &=&
                - \sum_{i, j, p, q} (\nabla_p \omega)_{qi}\, \eta_{ij}\, (\nabla_p \eta)(e_q, Je_j) \\
              &=& - \sum_{i, j, p, q}  (\nabla_p \omega)_{iq} \, \eta(e_i, Je_j) (\nabla_p \eta)_{qj}  \\
              &=& \sum_{i, j, p, q}  (\nabla_p \omega)_{iq} \, h_{ij} (\nabla_p \eta)_{qj}.
\end{eqnarray*}
For the third term, we compute that
\begin{eqnarray*}
- \sum_{i, j, p, q} \, (\nabla_p \omega)_{iq}\, \eta_{qj}\, \eta(e_i, (\nabla_p J)e_j) &=&
      - \sum_{i, j, k, p, q} \, g((\nabla_p J)e_i, e_q) \, \eta_{qj}\, g( (\nabla_p J)e_j, e_k)\, \eta_{ik} \\
      & = & - \sum_{i, j, k, p, q} \, g((\nabla_i J)e_p, e_q) \, g( (\nabla_j J)e_p, e_k)\,  \eta_{qj} \,\eta_{ik} \\
      &=& - \sum_{i, j, k, p, q} \, g((\nabla_i J)e_q, e_p) \, g( (\nabla_j J)e_k, e_p)\,  \eta_{qj} \,\eta_{ik} \\
      &=&  - \sum_{i, j, k,  q} \, g((\nabla_i J)e_q, (\nabla_j J)e_k)\,  \eta_{qj} \,\eta_{ik} \\
      &=&  2 \|h \|^2
\end{eqnarray*}
where we used the nearly K\"ahler condition in the second equality, and (\ref{const-type}) and the pointwise
orthogonality between $\eta$ and $\omega$ to obtain the last equality.

Combining these calculations  with (\ref{2form-byparts}) and (\ref{bypartsII}), we obtain
\begin{equation*}
- \sum_{p, q} \, (\nabla_p \omega)_{iq} \, (\nabla_p \eta)_{qj}\, h_{ij}  =  \sum_{i, j, p, q}  (\nabla_p \omega)_{iq} \, h_{ij} (\nabla_p \eta)_{qj} - 2 \|h\|^2 + {\rm divergence \,\,\,term}.
\end{equation*}
Therefore,
$$- 2 \sum_{p, q} \, (\nabla_p \omega)_{iq} \, (\nabla_p \eta)_{qj}\, h_{ij} = -2 \|h \|^2 + {\rm divergence \,\,\,term},$$
which implies that
$$  \langle \nabla^* \nabla h - 2 \mathring{R}h, h \rangle_{L^2(M, g)} = -4 \|h\|^2_{L^2(M, g)}. $$
This shows that the quadratic form  $\mathcal Q$ is negative definite on the subspace of TT-tensors corresponding
to the harmonic $2$-forms on $M$.


\section{\bf The $b_3(M) \neq 0$ Case}
In this section we will prove the $b_3(M)\neq0$ case of Theorem \ref{MainThm} by constructing a destabilizing TT symmetric 2-tensor from any given harmonic 3-form via the isomorphism $\sigma^{+}: \Lambda^{3}_{12}\rightarrow S^{2}_{12}$ obtained in Proposition \ref{3forms-to-TT}.
Let $\eta\in \Lambda^{3}_{12}$ be a harmonic 3-form, and define $h_{\eta}\in S^{2}_{12}$ as
\begin{equation}\label{definition-of-h_eta}
h_{\eta}(X, Y):=\sigma^{+}(\eta)(X, Y)=\sum_{i,j}\left(\eta(X, e_{i}, e_{j})\Omega^{+}(Y, e_{i}, e_{j})+\eta(Y, e_{i}, e_{j})\Omega^{+}(X, e_{i}, e_{j})\right).
\end{equation}
We will show that $h_{\eta}$ is a destabilizing direction.

Recall $h_{\eta}\in S^{2}_{12}$ is skew $J$-invariant, i.e.
\begin{equation}  \label{skew-J-invarance}
-h_{\eta}(X, Y)=(J\cdot h_{\eta})(X, Y)=h_{\eta}(J^{-1}X, J^{-1}Y)=h_{\eta}(JX, JY).
\end{equation}
This implies $\tr_{g}(h_{\eta})=0$. Indeed,
\begin{equation*}
\tr_{g}(h_{\eta})=\sum_{i}h_{\eta}(e_{i}, e_{i})=-\sum_{i}h_{\eta}(Je_{i}, Je_{i})=-\tr_{g}(h_{\eta}),
\end{equation*}
since $\{Je_{1}, \cdots, Je_{6}\}$ is also a local orthonormal frame.

\begin{lemma}\label{TT-tensor-h_eta}
$h_{\eta}$ is divergence-free.
\end{lemma}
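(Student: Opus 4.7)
The plan is to verify $(\delta h_\eta)(Y) = 0$ for every tangent vector $Y$ by direct computation at a point $p$, using a local orthonormal frame $\{e_i\}$ with $\nabla_{e_i}e_j|_p = 0$. Applying the Leibniz rule to the definition (\ref{definition-of-h_eta}), the expression $-\sum_k(\nabla_{e_k}h_\eta)(e_k, Y)$ splits into four sums
\begin{align*}
(I) &= \sum_{k,i,j}(\nabla_{e_k}\eta)(e_k, e_i, e_j)\Omega^+(Y, e_i, e_j), \\
(II) &= \sum_{k,i,j}\eta(e_k, e_i, e_j)(\nabla_{e_k}\Omega^+)(Y, e_i, e_j), \\
(III) &= \sum_{k,i,j}(\nabla_{e_k}\eta)(Y, e_i, e_j)\Omega^+(e_k, e_i, e_j), \\
(IV) &= \sum_{i,j}\eta(Y, e_i, e_j)\sum_k(\nabla_{e_k}\Omega^+)(e_k, e_i, e_j),
\end{align*}
which must total zero.

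Sums $(I)$ and $(IV)$ drop out individually from the harmonicity and primitivity of $\eta$. $(I)$ vanishes since $\sum_k(\nabla_{e_k}\eta)(e_k,\cdot,\cdot) = -\delta\eta = 0$. For $(IV)$, the inner factor $\sum_k(\nabla_{e_k}\Omega^+)(e_k, e_i, e_j) = -(\delta\Omega^+)(e_i, e_j)$ is, by $\SU(3)$-equivariance, a pointwise multiple of $\omega$ (the unique invariant $2$-form up to scale). Concretely, using $*\Omega^+ = \Omega^-$, the identity $d\Omega^- = -2\omega\wedge\omega$, and the dimension-six identity $*(\omega\wedge\omega) = 2\omega$, one finds $\delta\Omega^+ = 4\omega$. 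Hence $(IV)$ is proportional to $\sum_{i,j}\eta(Y, e_i, e_j)\omega(e_i, e_j)$, which vanishes by the primitivity of harmonic $3$-forms (Verbitsky's theorem).

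Sums $(II)$ and $(III)$ must be handled together. Use $d\eta(e_k, Y, e_i, e_j) = 0$ to rewrite $(\nabla_{e_k}\eta)(Y, e_i, e_j)$ as $(\nabla_Y\eta)(e_k, e_i, e_j)$ plus two terms skew in $(i, j)$. The $(\nabla_Y\eta)$ piece converts, via the pointwise orthogonality $\langle\eta, \Omega^+\rangle = 0$ for $\eta \in \Lambda^3_{12}$ and the Leibniz rule, into $-\sum\eta(e_k, e_i, e_j)(\nabla_Y\Omega^+)(e_k, e_i, e_j)$. The covariant derivative $\nabla_X\Omega^+$ itself is made algebraic from $\overline{\nabla}\Omega^+ = 0$ and $\overline{\nabla}_X = \nabla_X - \tfrac{1}{2}J(\nabla_X J)$, which together with the Gray identity $g((\nabla_X J)U, V) = (\nabla_X\omega)(U, V) = \Omega^+(X, U, V)$ expresses $\nabla_X\Omega^+$ purely in terms of $X$, $\Omega^+$, and $\Omega^-$. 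After substitution, every term in $(II) + (III)$ becomes a quadratic contraction of $\eta$ against a product $\Omega^{\pm}\cdot\Omega^{\pm}$, which by the standard $\SU(3)$ contraction identities reduces to multiples of $g$ and $\omega$. Invoking primitivity of $\eta$ a second time produces $(II) + (III) = 0$.

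The main obstacle is the final cancellation of $(II)$ against $(III)$: one must track the index rearrangements carefully, manage the sign changes induced by the $J$-shift identities (\ref{Omega-prop}) relating $\Omega^+$ and $\Omega^-$, and apply the contraction formulas for products of $\Omega^{\pm}$ in the right combinations. All the requisite ingredients are collected in Section \ref{facts}, but the bookkeeping is dense and delicate.
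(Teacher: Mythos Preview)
Your decomposition into $(I)$--$(IV)$ and the treatment of $(I)$ and $(IV)$ match the paper's argument exactly. The difference lies in how you propose to handle $(II)$ and $(III)$.

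The paper does not combine $(II)$ and $(III)$; each vanishes separately, and the reason is the explicit identity (equation~(\ref{covariant-derivative-of-Omega+}) in the paper, from \cite{MNS08})
\[
\nabla_X\Omega^+ = -X^\flat\wedge\omega,
\]
which you bypass in favour of deriving $\nabla_X\Omega^+$ from $\overline{\nabla}\Omega^+ = 0$ and then reducing via ``$\SU(3)$ contraction identities'' for products $\Omega^\pm\cdot\Omega^\pm$. With the explicit formula in hand, $(II)$ collapses in one line to $-\sum_{i,j}\eta(Y,e_i,e_j)\omega_{ij}=0$ by primitivity. For $(III)$ the paper performs exactly the $d\eta=0$ rewrite and the Leibniz step on $\langle\eta,\Omega^+\rangle=0$ that you describe, obtaining $3\,(III) = -\sum\eta_{kij}(\nabla_Y\Omega^+)_{kij}$; one more application of $\nabla_Y\Omega^+=-Y^\flat\wedge\omega$ and primitivity then gives $(III)=0$ directly.

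So your outline is sound and completable, but the ``dense and delicate bookkeeping'' you anticipate for $(II)+(III)$ is an artefact of not invoking $\nabla_X\Omega^+=-X^\flat\wedge\omega$. That single formula replaces the canonical-connection detour and the $\Omega^\pm$ contraction identities, and it reveals that no genuine cancellation between $(II)$ and $(III)$ is needed: both are zero on their own, for the same reason $(IV)$ is.
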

\begin{proof}
As before we still compute at a fixed but arbitrary point $x\in M$, with a local orthonormal frame $\{e_{1}, \cdots, e_{6}\}$ satisfying $\nabla_{e_{i}}e_{j}=0$ at $x$ for all $1\leq i, j\leq 6$. The negative divergence of $h_{\eta}$ is
\begin{eqnarray*}
-(\delta h_{\eta})(e_{j})
& = & \sum_{i}\,(\nabla_{e_{i}}h_{\eta})(e_{i}, e_{j}) \cr
& = & \sum_{i, p, q} \,e_{i}(\eta(e_{i}, e_{p}, e_{q})\Omega^{+}(e_{j}, e_{p}, e_{q})+\eta(e_{j}, e_{p}, e_{q})\Omega^{+}(e_{i}, e_{p}, e_{q}))\cr
& = & \sum_{i, p, q} \,\eta_{ipq}(\nabla_{e_{i}}\Omega^{+})_{jpq} + \sum_{i, p, q} \,(\nabla_{e_{i}}\eta)_{jpq}\Omega^{+}_{ipq}
       + \sum_{i, p, q} \,\eta_{jpq}(\nabla_{e_{i}}\Omega^{+})_{ipq},
\end{eqnarray*}
since $\delta\eta=0$.

Recall the identity (see, e.g. p. 64 in \cite{MNS08})
\begin{equation} \label{covariant-derivative-of-Omega+}
\nabla_{X}\Omega^{+}=-X^{\flat}\wedge\omega.
\end{equation}
Thus,
\begin{equation*}
(\nabla_{e_{i}}\Omega^{+})_{jpq}=-(e^{i}\wedge\omega)_{jpq}=-\delta^{i}_{j}\,\omega_{pq}+\delta^{i}_{p}\,\omega_{jq}-\delta^{i}_{q}\,\omega_{jp},
\end{equation*}
and
\begin{equation*}
\sum_{i}(\nabla_{e_{i}}\Omega^{+})_{ipq}=-\sum_{i}(e^{i}\wedge\omega)_{ipq}
=-\sum_{i}\,(\omega_{pq}-\delta^{i}_{p}\,\omega_{iq}+\delta^{i}_{q}\,\omega_{ip})=-4\omega_{pq}.
\end{equation*}

Moreover, for any fixed $1\leq j\leq 6$,
\begin{equation} \label{eta-omega-inner-product}
\sum_{p, q}\,\eta_{jpq}\,\omega_{pq}=2\langle i_{e_{j}}\eta, \omega \rangle = 2\langle \eta, e^{j}\wedge\omega \rangle =0,
\end{equation}
since $\eta\in\Lambda^{3}_{12}$, $e^{j}\wedge\omega\in\Lambda^{3}_{6}$, and the decomposition in (\ref{3form-decomposition}) is
pointwise orthogonal. Here $\langle \cdot , \cdot \rangle$ is the inner product of forms.

Consequently, the 1st term in the above expression of $-(\delta h_{\eta})(e_{j})$ vanishes, since
\begin{eqnarray*}
\sum_{i, p, q}\,\eta_{ipq}\,(\nabla_{e_{i}}\Omega^{+})_{jpq}
& = & \sum_{i, p, q}\,(-\eta_{ipq}\,\delta^{i}_{j}\,\omega_{pq}+\eta_{ipq}\,\delta^{i}_{p}\,\omega_{jq}-\eta_{ipq}\,\delta^{i}_{q}\,\omega_{jp})\cr
& = & -\sum_{p, q}\, \eta_{jpq}\,\omega_{pq}=0.
\end{eqnarray*}
Similarly, the 3rd term vanishes as well.

Finally, for the 2nd term, we have
\begin{eqnarray*}
\sum_{i, p, q}(\nabla_{e_{i}}\eta)_{jpq}\Omega^{+}_{ipq}
& = & \sum_{i, p, q}\Omega^{+}_{ipq}\left((\nabla_{e_{j}}\eta)_{ipq}-(\nabla_{e_{p}}\eta)_{ijq}+(\nabla_{e_{q}}\eta)_{ijp}\right)\cr
& = & \sum_{i, p, q}e_{j}\left(\Omega^{+}_{ipq}\eta_{ipq}\right)-\sum_{i, p, q}\eta_{ipq}(\nabla_{e_{j}}\Omega^{+})_{ipq}
-2\sum_{i, p, q}\Omega^{+}_{ipq}(\nabla_{e_{p}}\eta)_{ijq}\cr
& = & -\sum_{i, p, q}\eta_{ipq}(\nabla_{e_{j}}\Omega^{+})_{ipq} - 2\sum^{n}_{i, p, q=1}(\nabla_{e_{i}}\eta)_{jpq}\Omega^{+}_{ipq},
\end{eqnarray*}
where we used $d\eta=0$ in the first equality, and $\langle \Omega^{+}, \eta \rangle=0$ in the last equality.
Then combining with (\ref{covariant-derivative-of-Omega+}) and (\ref{eta-omega-inner-product}) again, we have
\begin{eqnarray*}
3\sum_{i, p, q}\,(\nabla_{e_{i}}\eta)_{jpq}\,\Omega^{+}_{ipq}
& = & -\sum_{i, p, q}\,\eta_{ipq}(\nabla_{e_{j}}\Omega^{+})_{ipq}\cr
& = & -\sum_{i, p, q}\,\eta_{ipq}\,(-\delta^{j}_{i}\,\omega_{pq}+\delta^{j}_{p}\,\omega_{iq}-\delta^{j}_{q}\,\omega_{ip})\cr
& = & 3\sum_{p, q}\, \eta_{jpq}\,\omega_{pq}=0.
\end{eqnarray*}

Thus $\delta (h_{\eta})=0$, and it proves the lemma.
\end{proof}

Now we claim:
\begin{equation}\label{h_eta_unstable}
(\nabla^{*}\nabla-2\mathring{R})h_{\eta}=-6h_{\eta}.
\end{equation}
This will complete the proof of the $b_{3}(M)\neq0$ case of Theorem \ref{MainThm}.

\medskip

{\bf Proof of (\ref{h_eta_unstable})}:
We still compute at a point $x\in M$ with a local orthonormal frame $\{e_{1}, \cdots, e_{6}\}$ satisfying $\nabla_{e_{i}}e_{j}= 0$ at $x$ for all $1\leq i,j\leq 6$.

By substituting in the definition of $h_{\eta}$ in (\ref{definition-of-h_eta}), straightforward calculations give
\begin{equation}\label{Laplace-of-h_eta1}
\begin{aligned}
(\nabla^{*}\nabla h_{\eta})_{jk}=
&-2\sum_{i, p, q}(\nabla_{e_{i}}\eta)_{jpq}(\nabla_{e_{i}}\Omega^{+})_{kpq} - 2\sum_{i, p, q}(\nabla_{e_{i}}\eta)_{kpq}(\nabla_{e_{i}}\Omega^{+})_{jpq}\\
& +\sum_{p, q}\left((\nabla^{*}\nabla\eta)_{jpq}\,\Omega^{+}_{kpq} + (\nabla^{*}\nabla\eta)_{kpq}\,\Omega^{+}_{jpq}\right)\\
& +\sum_{p, q}\left((\nabla^{*}\nabla\Omega^{+})_{jpq}\,\eta_{kpq} + (\nabla^{*}\nabla\Omega^{+})_{kpq}\,\eta_{jpq}\right)\\
& -2\sum_{i, p, q}\,\left(\Omega^{+}_{kpq}\,\eta(e_{j}, \nabla_{e_{i}}\nabla_{e_{i}}e_{p}, e_{q}) + \Omega^{+}_{jpq}\,\eta(e_{k}, \nabla_{e_{i}}\nabla_{e_{i}}e_{p}, e_{q})\right)\\
& -2\sum_{i, p, q}\,\left(\eta_{jpq}\,\Omega^{+}(e_{k}, \nabla_{e_{i}}\nabla_{e_{i}}e_{p}, e_{q}) + \eta_{kpq}\,\Omega^{+}(e_{j}, \nabla_{e_{i}}\nabla_{e_{i}}e_{p}, e_{q})\right).
\end{aligned}
\end{equation}

The last four terms are cancelled out, because
\begin{equation*}
g(\nabla_{e_{i}}\nabla_{e_{i}}e_{p}, e_{l}) = -g(e_{p}, \nabla_{e_{i}}\nabla_{e_{i}}e_{l})
\end{equation*}
implies
\begin{equation*}
\sum_{i, p, q} \,\eta_{jpq}\, \Omega^{+}(e_{k}, \nabla_{e_{i}}\nabla_{e_{i}}e_{p}, e_{q})
=-\sum_{i, p, q} \, \eta(e_{j}, \nabla_{e_{i}}\nabla_{e_{i}}e_{p}, e_{q})\, \Omega^{+}_{kpq},
\end{equation*}
and similarly for the other two terms.

For the sum of 1st and 2nd terms, we use the identities in (\ref{covariant-derivative-of-Omega+})
and (\ref{eta-omega-inner-product}), and then obtain
\begin{eqnarray*}
\sum_{i, p, q}(\nabla_{e_{i}}\eta)_{jpq}(\nabla_{e_{i}}\Omega^{+})_{kpq}
& = & \sum_{i, p, q}\,(\nabla_{e_{i}}\eta)_{jpq} \left( -\delta^{i}_{k}\,\omega_{pq} +
      \delta^{i}_{p}\,\omega_{kq} - \delta^{i}_{q}\,\omega_{kp} \right)\cr
& = & -\sum_{p, q} \,(\nabla_{e_{k}}\eta)_{jpq}\,\omega_{pq}\cr
& = & -\sum_{p, q} \left( e_{k}(\eta_{jpq}\,\omega_{pq}) - \eta_{jpq}(\nabla_{e_{k}}\,\omega)_{pq} \right)\cr
& = & \sum_{p, q} \,\eta_{jpq}\,\Omega^{+}_{kpq},
\end{eqnarray*}
since $\nabla\omega=\Omega^{+}$. We also used $\delta\eta=0$ in the 2nd equality. Then
\begin{eqnarray*}
-2\sum_{i, p, q}(\nabla_{e_{i}}\eta)_{jpq}(\nabla_{e_{i}}\Omega^{+})_{kpq} - 2\sum_{i, p, q}(\nabla_{e_{i}}\eta)_{kpq}(\nabla_{e_{i}}\Omega^{+})_{jpq}
& = & -2\sum_{p, q} \left( \eta_{jpq}\Omega^{+}_{kpq} + \eta_{kpq}\Omega^{+}_{jpq} \right) \\
&=& -2(h_{\eta})_{jk}.
\end{eqnarray*}

For the sum of 5th and 6th terms, the identity in (\ref{covariant-derivative-of-Omega+}) implies
\begin{eqnarray*}
\left(\nabla^{*}\nabla\Omega^{+}\right)_{jpq}
& = & -\sum_{i} (\nabla_{e_{i}}\nabla_{e_{i}}\Omega^{+})_{jpq}\cr
& = & -\sum_{i} e_{i}\left( (\nabla_{e_{i}}\Omega^{+})(e_{j}, e_{p}, e_{q}) \right)\cr
& = & (\nabla_{e_{j}}\omega)_{pq} - (\nabla_{e_{p}}\omega)_{jq} + (\nabla_{e_{q}}\omega)_{jp}\cr
& = & (d\omega)_{jpq} = 3\Omega^{+}_{jpq}.
\end{eqnarray*}
Then
\begin{equation*}
\sum_{p, q}\left( (\nabla^{*}\nabla\Omega^{+})_{jpq}\,\eta_{kpq} + (\nabla^{*}\nabla\Omega^{+})_{kpq}\,\eta_{jpq} \right) = 3(h_{\eta})_{jk}.
\end{equation*}

Thus, (\ref{Laplace-of-h_eta1}) becomes
\begin{equation}\label{Laplace-of-h_eta2}
(\nabla^{*}\nabla h_{\eta})_{jk}
 = (h_{\eta})_{jk}  + \sum_{p, q}\left( (\nabla^{*}\nabla\eta)_{jpq}\,\Omega^{+}_{kpq} + (\nabla^{*}\nabla\eta)_{kpq}\,\Omega^{+}_{jpq} \right).
\end{equation}

The Weitzenb\"ock formula
\begin{equation*}
\left( (d\delta+\delta d)\eta \right)_{jpq} = (\nabla^{*}\nabla\eta)_{jpq} + \sum_{i}\left( (R_{e_{i}e_{j}}\eta)_{ipq} - (R_{e_{i}e_{p}}\eta)_{ijq} + (R_{e_{i}e_{q}}\eta)_{ijp} \right)
\end{equation*}
together with $\Ric_{jl}=5g_{jl}$ imply
\begin{equation*}
\left( \nabla^{*}\nabla\eta \right)_{jpq}
= -15\eta_{jpq} - \sum_{i, l}R_{jpil}\,\eta_{ilq} - \sum_{i, l}R_{qpil}\,\eta_{ijl} - \sum_{i, l}R_{jqil}\,\eta_{ipl},
\end{equation*}
since $\eta$ is harmonic. Thus
\begin{eqnarray*}
& & \sum_{p, q}\,\left( (\nabla^{*}\nabla\eta)_{jpq}\,\Omega^{+}_{kpq} + (\nabla^{*}\nabla\eta)_{kpq}\,\Omega^{+}_{jpq} \right)\cr
& = & -15(h_{\eta})_{jk} - 2\sum_{p, q, i, l}\,R_{jpil}\,\eta_{ilq}\,\Omega^{+}_{kpq}
       - 2\sum_{p, q, i, l}\,R_{kpil}\,\eta_{ilq}\,\Omega^{+}_{jpq}\cr
& & +\sum_{p, q, i, l}\, R_{pqil}\,(\eta_{ijl}\,\Omega^{+}_{kpq} + \eta_{ikl}\,\Omega^{+}_{jpq}).
\end{eqnarray*}
Substituting this into (\ref{Laplace-of-h_eta2}) and using
\begin{eqnarray*}
( \mathring{R}h_{\eta} )_{jk}
 =  -\sum_{i, l}R_{jikl}(h_{\eta})_{il}
 =  -\sum_{p, q, i, l} R_{jikl}\,\left( \eta_{ipq}\,\Omega^{+}_{lpq} + \eta_{lpq}\,\Omega^{+}_{ipq} \right),
\end{eqnarray*}
we have
\begin{equation}\label{Lichnerowicz-Laplace-h_eta}
\begin{aligned}
( (\nabla^{*}\nabla-2\mathring{R})h_{\eta} )_{jk} =
& -14(h_{\eta})_{jk} + 2\sum_{p, q, i, l}\,R_{jikl}\,(\eta_{ipq}\,\Omega^{+}_{lpq} + \eta_{lpq}\,\Omega^{+}_{ipq})\\
& -2\sum_{p, q, i, l} \,\left(R_{jpil}\eta_{ilq}\,\Omega^{+}_{kpq} + R_{kpil}\,\eta_{ilq}\,\Omega^{+}_{jpq}\right)\\
& +\sum_{p, q, i, l} \,R_{pqil}\,(\eta_{ijl}\,\Omega^{+}_{kpq} + \eta_{ikl}\,\Omega^{+}_{jpq}).
\end{aligned}
\end{equation}

In the rest of the proof, we show
\begin{equation}\label{C}
\sum_{p, q, i, l} R_{pqil}(\eta_{ijl}\,\Omega^{+}_{kpq} + \eta_{ikl}\,\Omega^{+}_{jpq}) = 2(h_{\eta})_{jk},
\end{equation}
and
\begin{equation}\label{A+B}
2\sum_{p, q, i, l}\,R_{jikl}(\eta_{ipq}\,\Omega^{+}_{lpq} + \eta_{lpq}\,\Omega^{+}_{ipq})
-2\sum_{p, q, i, l}\, \left(R_{jpil}\,\eta_{ilq}\,\Omega^{+}_{kpq} + R_{kpil}\,\eta_{ilq}\,\Omega^{+}_{jpq}\right) = 6(h_{\eta})_{jk}.
\end{equation}
Then plugging (\ref{C}) and (\ref{A+B}) into (\ref{Lichnerowicz-Laplace-h_eta}) completes the proof of (\ref{h_eta_unstable}).

\medskip

{\bf Proof of (\ref{C})}.
By using identities in (\ref{curv-1}), (\ref{Omega-prop}) in the 2nd equality below, and (\ref{const-type}) in the 3rd equality below, we have
\begin{eqnarray*}
\sum_{p, q, i, l} R_{pqil}\,\eta_{ijl}\,\Omega^{+}_{kpq}
& = & \sum_{p, q, i, l} \,R(e_{i}, e_{l}, Je_{p}, Je_{q})\,\Omega^{+}(e_{k}, Je_{p}, Je_{q})\,\eta_{ijl}\cr
& = & -\sum_{p, q, i, l} \,\left( R_{ilpq} + g((\nabla_{e_{i}}J)e_{l}, (\nabla_{e_{p}}J)e_{q}) \right)\Omega^{+}_{kpq}\,\eta_{ijl}\cr
& = & -\sum_{p, q, i, l} \left( R_{ilpq} + \delta_{ip}\delta_{lq} - \delta_{iq}\delta_{lp} -\omega_{ip}\omega_{lq} + \omega_{iq}\omega_{lp} \right) \Omega^{+}_{kpq}\eta_{ijl}\cr
& = & -\sum_{p, q, i, l}\,R_{pqil}\,\Omega^{+}_{kpq}\,\eta_{ijl} - \sum_{p, q, i, l}( \delta_{ip}\delta_{lq} - \delta_{iq}\delta_{lp} -\omega_{ip}\omega_{lq} + \omega_{iq}\omega_{lp} )\Omega^{+}_{kpq}\,\eta_{ijl}.
\end{eqnarray*}
So
\begin{eqnarray*}
 2\sum_{p, q, i, l}\, R_{pqil}\,\eta_{ijl}\,\Omega^{+}_{kpq}
& = & - \sum_{p, q, i, l}\, ( \delta_{ip}\delta_{lq} - \delta_{iq}\delta_{lp} -\omega_{ip}\omega_{lq} + \omega_{iq}\omega_{lp})\,\Omega^{+}_{kpq}\,\eta_{ijl}\cr
& = & 2\sum_{p, q}\,\eta_{jpq}\,\Omega^{+}_{kpq} + \sum_{i, l}\,\Omega^{+}(e_{k}, Je_{i}, Je_{l})\,\eta_{ijl}
        - \sum_{i, l}\, \Omega^{+}(e_{k}, Je_{l}, Je_{i})\,\eta_{ijl}\cr
& = & 2\sum_{p, q}\,\eta_{jpq}\,\Omega^{+}_{kpq} - \sum_{i, l}\,\Omega^{+}_{kil}\,\eta_{ijl} + \sum_{i, l}\,\Omega^{+}_{kli}\,\eta_{ijl}\cr
& = & 4\sum_{p, q}\,\eta_{jpq}\,\Omega^{+}_{kpq}.
\end{eqnarray*}
Then switching indices $j, k$ gives
\begin{equation*}
\sum_{p, q, i, l}\,R_{pqil}\,\eta_{ikl}\,\Omega^{+}_{jpq} = 2\sum_{p, q}\,\eta_{kpq}\,\Omega^{+}_{jpq}.
\end{equation*}
Thus
\begin{equation*}
\sum_{p, q, i, l}\, R_{pqil}\,(\eta_{ijl}\,\Omega^{+}_{kpq} + \eta_{ikl}\,\Omega^{+}_{jpq}) = 2 \sum_{p, q}\, (\eta_{jpq}\,\Omega^{+}_{kpq} + \eta_{kpq}\,\Omega^{+}_{jpq}) = 2(h_{\eta})_{jk}.
\end{equation*}
This completes the proof of (\ref{C}).

\medskip

{\bf Proof of (\ref{A+B})}.
\begin{eqnarray*}
&   & 2\sum_{p, q, i, l}\,R_{jikl}\,(\eta_{ipq}\,\Omega^{+}_{lpq} + \eta_{lpq}\,\Omega^{+}_{ipq})
-2\sum^{n}_{p, q, i, l} \,\left(R_{jpil}\,\eta_{ilq}\,\Omega^{+}_{kpq} + R_{kpil}\,\eta_{ilq}\,\Omega^{+}_{jpq}\right)\cr
& = & 2\sum_{p, q, i, l}\, R_{jikl}\,\eta_{ipq}\,\Omega^{+}_{lpq} - 2\sum_{p, q, i, l}\, R_{jpil}\,\eta_{ilq}\,\Omega^{+}_{kpq}\cr
&   & + 2\sum_{p, q, i, l}\, R_{kijl}\,\eta_{ipq}\,\Omega^{+}_{lpq} - 2\sum_{p, q, i, l}\, R_{kpil}\,\eta_{ilq}\,\Omega^{+}_{jpq}\cr
& = & {\rm I} + {\rm II}.
\end{eqnarray*}
Here I and II denote respectively the sum of the first two terms and the sum of the last two terms.

We first proceed with the sum I.
\begin{eqnarray*}
{\rm I}
& = & 2\sum_{p, q, i, l}\, R_{jikl}\,\eta_{ipq}\,\Omega^{+}_{lpq} - 2\sum_{p, q, i, l}\, R_{jpil}\,\eta_{ilq}\,\Omega^{+}_{kpq}\cr
& = & 2\sum_{p, q, i, l} \,\left( R_{jikl}\,\eta_{ipq}\,\Omega^{+}_{lpq} + R_{ljip}\,\eta_{ipq}\,\Omega^{+}_{klq} \right)\cr
& = & 2\sum_{p, q, i} \,\eta_{ipq} \left( \Omega^{+}(R_{e_{j}e_{i}}(e_{k}), e_{p}, e_{q}) - \Omega^{+}(e_{k}, R_{e_{i}e_{p}}(e_{j}), e_{q}) \right)\cr
& = & 2\sum_{p, q, i} \eta_{ipq} \left( \Omega^{+}(R_{e_{j}e_{i}}(e_{k}), e_{p}, e_{q}) + \Omega^{+}(e_{k}, R_{e_{j}e_{i}}(e_{p}), e_{q}) + \Omega^{+}(e_{k}, R_{e_{p}e_{j}}(e_{i}), e_{q}) \right)\cr
& = & 2\sum_{p, q, i} \eta_{ipq} \left( -(R_{e_{j}e_{i}}\Omega^{+})_{kpq} - \Omega^{+}(e_{k}, e_{p}, R_{e_{j}e_{i}}(e_{q})) + \Omega^{+}(e_{k}, R_{e_{p}e_{j}}(e_{i}), e_{q}) \right)\cr
& = & -2\sum_{p, q, i}\, \eta_{ipq}\, (R_{e_{j}e_{i}}\Omega^{+})_{kpq} + 2\sum_{p, q, i}\,\eta_{ipq}\,\Omega^{+}(e_{k}, R_{e_{j}e_{i}}(e_{q}), e_{p}) + 2\sum_{p, q, i}\,\eta_{ipq}\,\Omega^{+}(e_{k}, R_{e_{p}e_{j}}(e_{i}), e_{q})\cr
& = & -2\sum_{p, q, i}\, \eta_{ipq} (R_{e_{j}e_{i}}\,\Omega^{+})_{kpq}.
\end{eqnarray*}
Here we used the Bianchi identity in 4th equality.

By applying the fact that the curvature $\overline{R}$ of the canonical Hermitian connection $\overline{\nabla}$ acts on $\Omega^{+}$ trivially, it becomes
\begin{eqnarray*}
{\rm I}
& = & -2\sum_{p, q, i} \eta_{ipq} ((R_{e_{j}e_{i}} - \overline{R}_{e_{j}e_{i}})\Omega^{+})_{kpq}\cr
& = & 2\sum_{p, q, i} \eta_{ipq} \big( \Omega^{+}((R_{e_{j}e_{i}}-\overline{R}_{e_{j}e_{i}})(e_{k}), e_{p}, e_{q}) + \Omega^{+}(e_{k}, (R_{e_{j}e_{i}} - \overline{R}_{e_{j}e_{i}})(e_{p}), e_{q})\cr
&   & + \Omega^{+}(e_{k}, e_{p}, (R_{e_{j}e_{i}} - \overline{R}_{e_{j}e_{i}})(e_{q})) \big)\cr
& = & 2\sum_{p, q, i} \eta_{ipq} \big( \Omega^{+}_{lpq}(R_{jikl} - \overline{R}_{jikl}) + \Omega^{+}_{klq}(R_{jipl} - \overline{R}_{jipl}) + \Omega^{+}_{kpl}(R_{jiql} - \overline{R}_{jiql}) \big)\cr
& = & \frac{1}{2}\sum_{p, q}\,\eta_{kpq}\,\Omega^{+}_{jpq} + \sum_{p, q}\,\eta_{jpq}\,\Omega^{+}_{kpq}
   + \frac{3}{2}\left( \sum_{p, q, i, l}\,\eta_{ipq}\,\Omega^{+}_{lpq}\,\omega_{il} \right)\omega_{jk}\cr
&   & - \sum_{p, q}\,\eta(Je_{j}, e_{p}, e_{q})\,\Omega^{+}(Je_{k}, e_{p}, e_{q})
      + \frac{3}{2}\sum_{p, q}\, \eta(Je_{k}, e_{p}, e_{q})\,\Omega^{+}(Je_{j}, e_{p}, e_{q})\cr
&   & -5\sum_{p, q}\,\eta(Je_{j}, e_{p}, e_{q})\,\Omega^{+}(e_{k}, Je_{p}, e_{q}).
\end{eqnarray*}
In the last equality, we used the identities in (\ref{curv-relation}) and (\ref{eta-omega-inner-product}), and $g(\eta, \Omega^{+})=0$.

Next, we deal with the last three terms in the above expression of I. The identities in (\ref{Omega-prop}) and (\ref{3-form-invariance}) imply
\begin{eqnarray*}
&   & \sum_{p, q}\, \eta(Je_{j}, e_{p}, e_{q})\,\Omega^{+}(Je_{k}, e_{p}, e_{q})\cr
& = & \sum_{p, q} \,\eta(Je_{j}, Je_{p}, e_{q})\,\Omega^{+}(Je_{k}, Je_{p}, e_{q})\cr
& = & -\sum_{p, q} \,\left( \eta_{jpq} - \eta(Je_{j}, e_{p}, Je_{q}) - \eta(e_{j}, Je_{p}, Je_{q}) \right)\,\Omega^{+}(e_{k}, e_{p}, e_{q})\cr
& = & -\sum_{p, q}\, \eta_{jpq}\,\Omega^{+}_{kpq} + \sum_{p, q}\, \eta(Je_{j}, e_{p}, Je_{q})\,\Omega^{+}(e_{k}, e_{p}, e_{q})
       + \sum_{p, q} \,\eta(e_{j}, Je_{p}, Je_{q})\,\Omega^{+}(e_{k}, e_{p}, e_{q})\cr
& = & -\sum_{p, q} \,\eta_{jpq}\,\Omega^{+}_{kpq} - \sum_{p, q}\, \eta(Je_{j}, e_{p}, Je_{q})\,\Omega^{+}(Je_{k}, e_{p}, Je_{q})
     - \sum_{p, q} \,\eta(e_{j}, Je_{p}, Je_{q})\,\Omega^{+}(e_{k}, Je_{p}, Je_{q})\cr
& = & -\sum_{p, q} \,\eta_{jpq}\,\Omega^{+}_{kpq} - \sum_{p, q} \,\eta(Je_{j}, e_{p}, e_{q})\,\Omega^{+}(Je_{k}, e_{p}, e_{q})
      - \sum_{p, q} \,\eta_{jpq}\,\Omega^{+}_{kpq}.
\end{eqnarray*}
So
\begin{equation*}
\sum_{p, q} \,\eta(Je_{j}, e_{p}, e_{q})\,\Omega^{+}(Je_{k}, e_{p}, e_{q}) = -\sum_{p, q} \,\eta_{jpq}\,\Omega^{+}_{kpq}.
\end{equation*}

Similar arguments show
\begin{equation*}
\sum_{p, q} \,\eta(Je_{k}, e_{p}, e_{q})\,\Omega^{+}(Je_{j}, e_{p}, e_{q}) = -\sum_{p, q} \,\eta_{kpq}\,\Omega^{+}_{jpq},
\end{equation*}
and
\begin{equation*}
\sum_{p, q}\, \eta(Je_{j}, e_{p}, e_{q})\,\Omega^{+}(e_{k}, Je_{p}, e_{q}) = -\sum_{p, q}\,\eta_{jpq}\,\Omega^{+}_{kpq}.
\end{equation*}

Thus
\begin{eqnarray*}
{\rm I}
& = & \frac{1}{2}\sum_{p, q}\,\eta_{kpq}\,\Omega^{+}_{jpq} + \sum_{p, q}\,\eta_{jpq}\,\Omega^{+}_{kpq}
   + \frac{3}{2}\left( \sum_{p, q, i, l}\,\eta_{ipq}\,\Omega^{+}_{lpq}\,\omega_{il} \right)\omega_{jk}\cr
&   & +\sum_{p, q}\, \eta_{jpq}\,\Omega^{+}_{kpq} - \frac{3}{2}\sum_{p, q} \,\eta_{kpq}\,\Omega^{+}_{jpq}
      + 5\sum_{p, q}\, \eta_{jpq}\,\Omega^{+}_{kpq}\cr
& = & -\sum_{p, q} \,\eta_{kpq}\,\Omega^{+}_{jpq} + 7\sum_{p, q}\,\eta_{jpq}\,\Omega^{+}_{kpq}
     + \frac{3}{2}\left( \sum_{p, q, i, l}\,\eta_{ipq}\,\Omega^{+}_{lpq}\,\omega_{il} \right)\omega_{jk}.
\end{eqnarray*}

Then by switching indices $j$ and $k$, we have
\begin{equation*}
{\rm II} = -\sum_{p, q}\,\eta_{jpq}\,\Omega^{+}_{kpq} + 7\sum_{p, q}\,\eta_{kpq}\,\Omega^{+}_{jpq}
     + \frac{3}{2}\left( \sum_{p, q, i, l}\,\eta_{ipq}\,\Omega^{+}_{lpq}\,\omega_{il} \right)\omega_{kj}.
\end{equation*}

Thus
\begin{equation*}
{\rm I} + {\rm II} = -(h_{\eta})_{jk} + 7 (h_{\eta})_{jk} = 6 (h_{\eta})_{jk}.
\end{equation*}
This completes the proof of (\ref{A+B}), as well as the proof of (\ref{h_eta_unstable}).
\qed

\end{document}